\newtheorem{theorem}{Theorem}
\newtheorem{ex}{Example}
\newtheorem{prop}[theorem]{Proposition}
\newtheorem{remark}{Remark}
\newtheorem{corollary}[theorem]{Corollary}
\newtheorem{claim}{Claim}
\newenvironment{proof-sketch}{\noindent{\bf Sketch of Proof}\hspace*{1em}}{\qed\bigskip}
\newcommand{\RR}{\mathbb R}
\newcommand{\NN}{\mathbb N}
\renewcommand{\leq}{\leqslant}
\renewcommand{\geq}{\geqslant}
\begin{document}
\title[Positive solutions for nonlinear parametric singular Dirichlet problems]{Positive solutions for nonlinear parametric singular Dirichlet problems}
\author[N.S. Papageorgiou]{Nikolaos S. Papageorgiou}
\address[N.S. Papageorgiou]{National Technical University, Department of Mathematics,
				Zografou Campus, Athens 15780, Greece \& Institute of Mathematics, Physics and Mechanics, Jadranska 19, 1000 Ljubljana, Slovenia}
\email{\tt npapg@math.ntua.gr}
\author[V.D. R\u{a}dulescu]{Vicen\c{t}iu D. R\u{a}dulescu}
\address[V.D. R\u{a}dulescu]{Institute of Mathematics, Physics and Mechanics, Jadranska 19, 1000 Ljubljana, Slovenia \& Faculty of Applied Mathematics, AGH University of Science and Technology, al. Mickiewicza 30, 30-059 Krak\'ow, Poland \& Institute of Mathematics ``Simion Stoilow" of the Romanian Academy, P.O. Box 1-764,
          014700 Bucharest, Romania}
\email{\tt vicentiu.radulescu@imar.ro}
\author[D.D. Repov\v{s}]{Du\v{s}an D. Repov\v{s}}
\address[D.D. Repov\v{s}]{Faculty of Education and Faculty of Mathematics and Physics, University of Ljubljana, 1000 Ljubljana, Slovenia \& Institute of Mathematics, Physics and Mechanics, Jadranska 19, 1000 Ljubljana, Slovenia}
\email{\tt dusan.repovs@guest.arnes.si}
\keywords{Parametric singular term, ($p-1$)-linear perturbation, uniform nonresonance, nonlinear regularity theory, truncation, strong comparison principle, bifurcation-type theorem.\\
\phantom{aa} 2010 AMS Subject Classification: 35J92, 35P30}
\begin{abstract}
We consider a nonlinear parametric Dirichlet problem driven by the $p$-Laplace differential operator and a reaction which has the competing effects of a parametric singular term and of a Carath\'eodory perturbation which is ($p-1$)-linear near $+\infty$. The problem is uniformly nonresonant with respect to the principal eigenvalue of $(-\Delta_p,W^{1,p}_0(\Omega))$. We look for positive solutions and prove a bifurcation-type theorem describing in an exact way the dependence of the set of positive solutions on the parameter $\lambda>0$.
\end{abstract}
\maketitle

\section{Introduction}

Let $\Omega \subseteq\RR^N$ be a bounded domain with  $C^2$-boundary $\partial\Omega$. In this paper we study the following nonlinear parametric singular Dirichlet problem:
\begin{equation}\tag{$P_{\lambda}$}\label{eqp}
	\left\{\begin{array}{l}
		-\Delta_pu(z)=\lambda u(z)^{-\gamma}+f(z,u(z))\ \mbox{in}\ \Omega,\\
		u|_{\partial\Omega}=0,\ u>0,\ \lambda>0,\ 0<\gamma<1.
	\end{array}\right\}
\end{equation}

In this problem, $\Delta_p$ denotes the $p$-Laplacian differential operator defined by
$$\Delta_pu={\rm div}\,(|Du|^{p-2}Du)\ \mbox{for all}\ u\in W^{1,p}_0(\Omega),\ 1<p<\infty.$$

On the right-hand side of \eqref{eqp} (the reaction of the problem), we have a parametric singular term $u\mapsto \lambda u^{-\gamma}$ with $\lambda>0$ being the parameter and $0<\gamma<1$. Also, there is a Carath\'eodory perturbation $f(z,x)$ (that is, for all $x\in\RR$ the mapping $z\mapsto f(z,x)$ is measurable and for almost all $z\in\Omega$ the mapping $x\mapsto f(z,x)$ is continuous). We assume that $f(z,\cdot)$ exhibits $(p-1)$-linear growth near $+\infty$.

 We are looking for positive solutions of problem \eqref{eqp}. Our aim is to describe in a precise way the dependence on the parameter $\lambda>0$ of the set of positive solutions.

 \textcolor{black}{
We prove a bifurcation-type property, which is the main result of our paper. Concerning the hypotheses $H(f)$ on the perturbation $f(z,x)$ and the other notation used in the statement of the theorem, we refer to Section~2.
The main result of the present paper is stated in the following theorem.}

\textcolor{black}{
{\bf Theorem A.} {\sl If hypotheses $H(f)$ hold, then there exists $\lambda^*\in(0,+\infty)$ such that
	\begin{itemize}
		\item [(a)] for every $\lambda\in(0,\lambda^*)$, problem \eqref{eqp} has at least two positive solutions
			$$
			u_\lambda,\hat{u}_\lambda\in {\rm int}\,C_+,\ u_\lambda\neq\hat{u}_\lambda, \ u_\lambda\leq\hat{u}_\lambda;
			$$
		\item [(b)] for $\lambda=\lambda^*$, problem \eqref{eqp} has at least one positive solution
			$$
			u^*_\lambda\in {\rm int}\,C_+;
			$$
		\item [(c)] for $\lambda>\lambda^*$, problem \eqref{eqp} has no positive solutions.
	\end{itemize}}
}

In the past, singular problems were studied in the context of semilinear equations (that is, $p=2$). We mention the works of Coclite \& Palmieri \cite{2}, Ghergu \& R\u{a}dulescu \cite{5}, Hirano, Saccon \& Shioji \cite{10}, Lair \& Shaker \cite{11}, Sun, Wu \& Long \cite{20}. A detailed bibliography and additional topics on the subject, can be found in the book of Ghergu \& R\u{a}dulescu \cite{6}. For nonlinear equations driven by the $p$-Laplacian, we mention the works of Giacomoni, Schindler \& Taka\v{c} \cite{7}, Papageorgiou, R\u{a}dulescu \& Repov\v{s} \cite{16, 16bis}, Papageorgiou \& Smyrlis \cite{17}, Perera \& Zhang \cite{18}. Of the aforementioned papers, closest to our work here is that of Papageorgiou \& Smyrlis \cite{17}, where the authors also deal with a parametric singular problem and prove a bifurcation-type result. In their problem, the perturbation $f(z,x)$ is ($p-1$)-superlinear in $x\in\RR$ near $+\infty$. So, our present work complements the results of \cite{17}, by considering equations in which the reaction has the competing effects of a singular term and of a $(p-1)$-linear term.

Our approach uses variational tools together with suitable truncation and comparison techniques.

\section{Preliminaries and hypotheses}

Let $X$ be a Banach space and $X^*$ its topological dual. By $\left\langle \cdot,\cdot\right\rangle$ we denote the duality brackets of the pair $(X^*,X)$. Given $\varphi\in C^1(X,\RR)$, we say that $\varphi$ satisfies the ``Cerami condition" (the ``C-condition" for short), if the following property holds:
\begin{center}
``Every sequence $\{u_n\}_{n\geq 1}\subseteq X$ such that
$$\{\varphi(u_n)\}_{n\geq 1}\subseteq\RR\ \mbox{is bounded and}\
\mbox{$(1+||u_n||)\varphi'(u_n)\rightarrow 0$ in $X^*$ as $n\rightarrow\infty$,}$$
admits a strongly convergent subsequence."
\end{center}

Using this notion, we can state the ``mountain pass theorem".
\begin{theorem}\label{th1} {\bf (Mountain pass theorem)}
	Assume that $\varphi\in C^1(X,\RR)$ satisfies the C-condition, $u_0,u_1\in X$, $||u_1-u_0||>\rho>0$,
	$$\max\{\varphi(u_0),\varphi(u_1)\}<\inf\{\varphi(u):||u-u_0||=\rho\}=m_{\rho}$$
	and $c=\inf\limits_{\gamma\in\Gamma}\max\limits_{0\leq t\leq 1}\ \varphi(\gamma(t))$ with $\Gamma=\{\gamma\in C([0,1],X):\gamma(0)=u_0,\gamma(1)=u_1\}$. Then $c\geq m_{\rho}$ and $c$ is a critical value of $\varphi$ (that is, we can find $\hat{u}\in X$ such that $\varphi'(\hat{u})=0$ and $\varphi(\hat{u})=c$).
\end{theorem}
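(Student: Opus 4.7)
The plan is to prove the two assertions separately: the inequality $c\geqslant m_\rho$ follows from a purely topological argument, while the claim that $c$ is a critical value is obtained by contradiction using a deformation lemma adapted to the Cerami condition.

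First I would establish $c\geqslant m_\rho$. Fix any $\gamma\in\Gamma$ and consider the continuous scalar function $t\mapsto\|\gamma(t)-u_0\|$. It equals $0$ at $t=0$ and equals $\|u_1-u_0\|>\rho$ at $t=1$, so by the intermediate value theorem there exists $t_\gamma\in(0,1)$ with $\|\gamma(t_\gamma)-u_0\|=\rho$. Hence $\varphi(\gamma(t_\gamma))\geqslant m_\rho$, which yields $\max_{t\in[0,1]}\varphi(\gamma(t))\geqslant m_\rho$. Taking the infimum over $\gamma\in\Gamma$ gives $c\geqslant m_\rho$, and in particular $c>\max\{\varphi(u_0),\varphi(u_1)\}$.

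The main step is to show that $c$ is a critical value. Suppose, towards a contradiction, that $K_c:=\{u\in X:\varphi'(u)=0,\ \varphi(u)=c\}$ is empty. Choose $\bar\varepsilon\in(0,\tfrac12(m_\rho-\max\{\varphi(u_0),\varphi(u_1)\}))$. Invoking the Cerami deformation lemma (which requires the C-condition and a locally Lipschitz pseudo-gradient vector field rescaled by the factor $(1+\|u\|)^{-1}$ to compensate for the weighted condition $(1+\|u_n\|)\varphi'(u_n)\to 0$), one obtains $\varepsilon\in(0,\bar\varepsilon)$ and a continuous deformation $\eta:[0,1]\times X\to X$ satisfying (i) $\eta(0,u)=u$, (ii) $\eta(s,u)=u$ whenever $|\varphi(u)-c|\geqslant\bar\varepsilon$, (iii) $s\mapsto\varphi(\eta(s,u))$ is nonincreasing, and (iv) $\eta(1,\varphi^{c+\varepsilon})\subseteq\varphi^{c-\varepsilon}$, where $\varphi^a=\{u:\varphi(u)\leqslant a\}$. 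By the choice of $\bar\varepsilon$ and the definition of $m_\rho$, both $u_0,u_1$ satisfy $\varphi(u_i)<c-\bar\varepsilon$, so (ii) gives $\eta(1,u_0)=u_0$ and $\eta(1,u_1)=u_1$. Choose $\gamma_0\in\Gamma$ with $\max_t\varphi(\gamma_0(t))\leqslant c+\varepsilon$ (by the infimum definition of $c$) and set $\tilde\gamma(t)=\eta(1,\gamma_0(t))$. Then $\tilde\gamma\in\Gamma$ and $\max_t\varphi(\tilde\gamma(t))\leqslant c-\varepsilon$, contradicting $c=\inf_{\gamma\in\Gamma}\max_t\varphi(\gamma(t))$.

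The principal technical obstacle is the construction of the deformation $\eta$ under the weaker Cerami condition rather than Palais--Smale. Once $K_c=\emptyset$ is assumed, the C-condition provides a uniform lower bound $(1+\|u\|)\|\varphi'(u)\|_*\geqslant\delta>0$ on a thin energy strip around $c$ (otherwise a sequence witnessing failure would, by C, have a cluster point in $K_c$). A locally Lipschitz pseudo-gradient $V$ for $\varphi$ is then truncated by a Urysohn-type cutoff supported in this strip and rescaled as $-\chi(u)(1+\|u\|)V(u)/\|V(u)\|^2$; the factor $(1+\|u\|)$ ensures that the resulting Cauchy problem has solutions defined for all $s\in[0,1]$ in spite of possibly unbounded flow lines, and that $\varphi$ drops by a definite amount along these lines. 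This nonstandard rescaling is the only place where the Cerami hypothesis is genuinely used, and it is the heart of the argument.
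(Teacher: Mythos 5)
The paper does not prove Theorem~\ref{th1} at all: it is quoted as a standard tool (the mountain pass theorem under the Cerami condition, as in Motreanu--Motreanu--Papageorgiou \cite{15} or Gasinski--Papageorgiou \cite{3}), so there is no proof in the paper to compare against. Your argument is the classical one and its skeleton is correct: the intermediate value theorem applied to $t\mapsto\|\gamma(t)-u_0\|$ gives $c\geq m_\rho$, and the assumption $K_c=\emptyset$ together with a Cerami-adapted deformation whose level-(ii) property fixes $u_0,u_1$ (since $\varphi(u_i)<c-\bar\varepsilon$ by your choice of $\bar\varepsilon$) lets you push an almost optimal path below $c-\varepsilon$, contradicting the definition of $c$; likewise your observation that $K_c=\emptyset$ plus the C-condition yields $(1+\|u\|)\,\|\varphi'(u)\|_*\geq\delta>0$ on a strip $\{|\varphi-c|\leq\bar\varepsilon\}$ is exactly the point where Cerami replaces Palais--Smale.

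The one step I would not accept as written is the construction of the flow field $-\chi(u)(1+\|u\|)V(u)/\|V(u)\|^2$ and the claim that the factor $(1+\|u\|)$ by itself guarantees global solvability of the Cauchy problem. With a pseudo-gradient $V$ one only has $\|V(u)\|\geq\|\varphi'(u)\|\geq\delta/(1+\|u\|)$ on the strip, so your field is bounded only by $(1+\|u\|)^2/\delta$; quadratic growth does not preclude finite-time blow-up of the flow, so the assertion ``solutions defined for all $s\in[0,1]$'' is not justified as stated. The standard repair is to normalize instead by $\|V(u)\|$, i.e. take $W(u)=-\chi(u)(1+\|u\|)V(u)/\|V(u)\|$: then $\|W(u)\|\leq 1+\|u\|$ (linear growth, hence global existence by Gronwall), while the descent estimate becomes $\frac{d}{ds}\varphi(\sigma(s))\leq-\tfrac12(1+\|\sigma(s)\|)\,\|\varphi'(\sigma(s))\|_*\leq-\delta/2$ on the strip, which is exactly where the Cerami lower bound enters and which still lets you deform $\varphi^{c+\varepsilon}$ into $\varphi^{c-\varepsilon}$ after reparametrizing time. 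With that modification (or by simply citing the Cerami deformation theorem from the references above), your proof is complete.
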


The analysis of problem \eqref{eqp} will involve the Sobolev space $W^{1,p}_0(\Omega)$ and the Banach space $$C^1_0(\overline{\Omega})=\{u\in C^1(\overline{\Omega}):u|_{\partial\Omega}=0\}.$$ We denote by $||\cdot||$ the norm of $W^{1,p}_0(\Omega)$. On account of the Poincar\'e inequality, we have
$$||u||=||Du||_p\ \mbox{for all}\ u\in W^{1,p}_0(\Omega).$$

The space $C^1_0(\overline{\Omega})$ is an ordered Banach space with positive (order) cone $$C_+=\{u\in C^1_0(\overline{\Omega}):u(z)\geq 0\ \mbox{for all}\ z\in\overline{\Omega}\}.$$ This cone has a nonempty interior given by
$${\rm int}\, C_+=\left\{u\in C_+:u(z)>0\ \mbox{for all}\ z\in\Omega,\ \left.\frac{\partial u}{\partial n}\right|_{\partial\Omega}<0\right\}.$$

Here, $n(\cdot)$ denotes the outward unit normal on $\partial\Omega$.

Let $h_1,h_2\in L^{\infty}(\Omega)$. We write $h_1\prec h_2$, if for every compact $K\subseteq\Omega$, we can find $c_K>0$ such that $c_K\leq h_2(z)-h_1(z)$ for almost all $z\in K$. Note that, if $h_1,h_2\in C(\Omega)$ and $h_1(z)<h_2(z)$ for all $z\in\Omega$, then $h_1\prec h_2$.

The next strong comparison principle can be found in Papageorgiou \& Smyrlis \cite[Proposition 4]{17} (see also Giacomoni, Schindler \& Taka\v{c} \cite[Theorem 2.3]{7}).

\begin{prop}\label{prop2}
	If $\hat{\xi}\geq 0,h_1,h_2\in L^{\infty}(\Omega)$, $h_1\prec h_2,u_1\in C_+$ with $u_1(z)>0$ for all $z\in\Omega$, $u_2\in {\rm int}\, C_+$ and
	\begin{eqnarray*}
		&&-\Delta_pu_1(z)+\hat{\xi}u_1(z)^{p-1}-\lambda u_1(z)^{-\gamma}=h_1(z),\\
		&&-\Delta_pu_2(z)+\hat{\xi}u_2(z)^{p-1}-\lambda u_2(z)^{-\gamma}=h_2(z)\ \mbox{for almost all}\ z\in\Omega,
	\end{eqnarray*}
	then $u_2-u_1\in {\rm int}\, C_+.$
\end{prop}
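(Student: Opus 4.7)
The argument naturally splits into two stages: first establish the pointwise ordering $u_1 \leq u_2$ on $\Omega$, then upgrade this to $u_2 - u_1 \in \operatorname{int}\,C_+$ via linearization together with the strong maximum principle and the Hopf boundary-point lemma.

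\emph{Stage 1 (weak comparison).} The plan is to test the difference of the two weak formulations with $v := (u_1-u_2)^+ \in W^{1,p}_0(\Omega)$. On $\operatorname{supp} v$ one has $u_1 > u_2 \geq c_0\, d(\cdot,\partial\Omega)$ (from $u_2 \in \operatorname{int}\,C_+$) and $v \leq u_1 \leq C\, d(\cdot,\partial\Omega)$, hence $u_i^{-\gamma} v \leq C\, d^{1-\gamma}$ is integrable ($\gamma < 1$) and the test is admissible. After subtraction, the $p$-Laplacian contribution is $\geq 0$ by monotonicity of $\xi \mapsto |\xi|^{p-2}\xi$, the $\hat\xi$-term is $\geq 0$, and crucially $-\lambda \int (u_1^{-\gamma} - u_2^{-\gamma})\, v \geq 0$ with \emph{strictly} positive integrand on $\{u_1 > u_2\}$. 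Since $h_1 \prec h_2$ yields $h_1 \leq h_2$ a.e., the right-hand side $\int (h_1 - h_2)\, v$ is $\leq 0$. All three nonnegative left-hand terms must therefore vanish; the pointwise strict positivity of the singular contribution then forces $|\{u_1 > u_2\}| = 0$, and continuity gives $u_1 \leq u_2$ in $\Omega$.

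\emph{Stage 2 (strict comparison).} Set $w := u_2 - u_1 \geq 0$. Subtracting the two equations, $w$ satisfies, distributionally,
\[
-\operatorname{div}\bigl(|Du_2|^{p-2}Du_2 - |Du_1|^{p-2}Du_1\bigr) + \hat\xi(u_2^{p-1} - u_1^{p-1}) + \lambda(u_1^{-\gamma} - u_2^{-\gamma}) = h_2 - h_1,
\]
with all three left-hand terms having the correct sign (each is $\geq 0$ because $0 < u_1 \leq u_2$). Linearize: write the $p$-Laplacian difference as $\operatorname{div}(A(z)Dw)$ with
\[
A(z) := \int_0^1 D_\xi(|\xi|^{p-2}\xi)\bigl|_{\xi = Du_1 + t\, Dw}\, dt,
\]
and the zeroth-order terms as $b(z) w$ with $b \geq 0$; the identity becomes the linear equation $-\operatorname{div}(A Dw) + b w = h_2 - h_1$, whose right-hand side is $\geq c_K > 0$ on any compact $K \Subset \Omega$. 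On subdomains where $|Du_1| + |Du_2|$ is bounded away from $0$, $A$ is uniformly elliptic and the classical strong maximum principle applied to the nonnegative $w$ forces $w > 0$; a covering/connectedness argument (or a direct appeal to a V\'azquez-type strong maximum principle for the $p$-Laplacian) then gives $w > 0$ throughout $\Omega$. For the boundary, fix $z_0 \in \partial\Omega$: since $u_2 \in \operatorname{int}\,C_+$, $|Du_2(z_0)| \geq \delta > 0$ and $A$ is uniformly elliptic in a one-sided interior neighbourhood of $z_0$; Hopf's boundary-point lemma yields $\partial w / \partial n(z_0) < 0$. Combined with $w > 0$ in $\Omega$ and $w|_{\partial\Omega} = 0$, this is exactly $w \in \operatorname{int}\,C_+$.

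\emph{Main obstacle.} The principal technical difficulty is the $p$-Laplacian degeneracy: the matrix $A$ is genuinely elliptic only where $|Du_1| + |Du_2| > 0$. The hypothesis $u_2 \in \operatorname{int}\,C_+$ is precisely what rescues the Hopf step near $\partial\Omega$, since it forces $|Du_2|$ to be bounded below along the inward normal. The singular term is an ally rather than an obstruction: the monotonicity of $t \mapsto -t^{-\gamma}$ on $(0,\infty)$ furnishes the strictly positive integrand that closes Stage 1, and contributes a nonnegative zeroth-order coefficient $b$ to the linearization in Stage 2.
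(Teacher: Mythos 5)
The paper does not actually prove Proposition \ref{prop2}: it is quoted directly from Papageorgiou \& Smyrlis \cite[Proposition 4]{17}, with Giacomoni, Schindler \& Taka\v{c} \cite[Theorem 2.3]{7} as an alternative source. So there is no ``paper's own proof'' to compare against, and your proposal has to be judged on its own merits.

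Your Stage 1 is correct and is essentially the weak comparison step that every treatment of this result contains. Testing the difference of the weak formulations with $(u_1-u_2)^+$, exploiting the monotonicity of the $p$-Laplacian map $A$, of $t\mapsto t^{p-1}$, and (crucially) of $t\mapsto -t^{-\gamma}$, and observing that $h_1\prec h_2$ forces $h_2-h_1>0$ a.e.\ on every compact and hence a.e.\ on $\Omega$, does yield $u_1\leq u_2$. Your integrability check for the singular term on the test region is also sound.

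Stage 2 has a genuine gap. The Hopf boundary step is fine: $u_2\in{\rm int}\,C_+$ gives $|Du_2|\geq\delta>0$ in a boundary strip, so $A(z)=\int_0^1 D_\xi(|\xi|^{p-2}\xi)|_{\xi=Du_1+tDw}\,dt$ is uniformly elliptic there and the boundary-point lemma applies. The problem is the interior strict positivity $w>0$ in $\Omega$. You propose applying the classical strong maximum principle on subdomains where $|Du_1|+|Du_2|$ is bounded below and then using a ``covering/connectedness argument (or a direct appeal to a V\'azquez-type strong maximum principle)'' to get $w>0$ everywhere. Neither clause closes the gap. The V\'azquez/Pucci--Serrin strong maximum principle governs a \emph{single} nonnegative supersolution of a quasilinear inequality; it does not apply to the \emph{difference} of two $p$-Laplacian solutions, which is precisely where strong comparison for the $p$-Laplacian is known to be delicate (see Cuesta--Taka\v{c}, Damascelli, Lucia--Prashanth). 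The set $Z=\{Du_1=Du_2=0\}$ need not have small measure and need not be avoidable by a chain of nondegenerate balls, so ``connectedness'' does not propagate positivity across it; moreover $w$ is only $C^{1,\alpha}$, so the pointwise tangency argument ($-\operatorname{tr}(A\,D^2w)\leq0$ at an interior minimum) is formal, and the weak Harnack inequality you would need to replace it requires uniform ellipticity on the ball, which fails exactly on $Z$. The references \cite{7,17} handle this by exploiting the singular term more structurally than your linearization does --- roughly, they derive from $u_1\leq u_2$ and $h_1\prec h_2$ a differential inequality for $w$ of a form to which a degenerate strong maximum principle does apply, or they restrict attention to a boundary collar where $|Du_2|>0$ and treat the interior separately using the $-\gamma$-singularity. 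If you want a self-contained proof, you should either reproduce one of those arguments or explicitly invoke a strong comparison theorem for the $p$-Laplacian that covers the degenerate set; as written, the interior half of Stage 2 is an assertion, not a proof.

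One smaller point worth flagging: your linearized zeroth-order coefficient $b$ contains the piece $\lambda\gamma\int_0^1(u_1+tw)^{-\gamma-1}\,dt$, which is only \emph{locally} bounded in $\Omega$ (it blows up as $u_1\to0$ near $\partial\Omega$). That is harmless for the interior argument on compacts, but it means the linear equation $-\operatorname{div}(A\,Dw)+bw=h_2-h_1$ is not globally one with bounded coefficients, and any appeal to boundary Harnack or global maximum-principle machinery needs to account for that.
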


We denote by $A:W^{1,p}_0(\Omega)\rightarrow W^{-1,p'}(\Omega)=W^{1,p}_0(\Omega)^*\left(\frac{1}{p}+\frac{1}{p'}=1\right)$  the nonlinear map defined by
$$\left\langle A(u),h\right\rangle=\int_{\Omega}|Du|^{p-2}(Du,Dh)_{\RR^N}dz\ \mbox{for all}\ u,h\in W^{1,p}_0(\Omega).$$

This map has the following properties (see Motreanu, Motreanu \& Papageorgiou \cite[p. 40]{15}).
\begin{prop}\label{prop3}
	The map $A:W^{1,p}_0(\Omega)\rightarrow W^{-1,p'}(\Omega)$ is bounded (that is, $A$ maps bounded sets to bounded sets), continuous, strictly monotone and of type $(S)_+$, that is, if $u_n\stackrel{w}{\rightarrow}u$ in $W^{1,p}_0(\Omega)$ and $\limsup\limits_{n\rightarrow\infty}\left\langle A(u_n),u_n-u\right\rangle\leq 0$, then $u_n\rightarrow u$ in $W^{1,p}_0(\Omega)$.
\end{prop}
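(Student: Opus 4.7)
The plan is to establish the four listed properties in order of increasing difficulty, relying on the pointwise inequalities for the vector field $\xi\mapsto|\xi|^{p-2}\xi$ on $\RR^N$ and on $L^p$-space machinery.

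For \emph{boundedness}, I would apply H\"older's inequality directly: for any $u,h\in W^{1,p}_0(\Omega)$,
$$|\langle A(u),h\rangle|\leq\int_\Omega|Du|^{p-1}|Dh|\,dz\leq\|Du\|_p^{p-1}\|Dh\|_p=\|u\|^{p-1}\|h\|,$$
so $\|A(u)\|_*\leq\|u\|^{p-1}$, proving that bounded sets go to bounded sets. For \emph{continuity}, suppose $u_n\to u$ in $W^{1,p}_0(\Omega)$, so $Du_n\to Du$ in $L^p(\Omega,\RR^N)$. Since the Nemytskii operator associated with the continuous map $\xi\mapsto|\xi|^{p-2}\xi$ from $\RR^N$ to $\RR^N$ is continuous from $L^p(\Omega,\RR^N)$ into $L^{p'}(\Omega,\RR^N)$ (noting that $\||\xi|^{p-2}\xi\|_{p'}=\|\xi\|_p^{p-1}$), one gets $|Du_n|^{p-2}Du_n\to|Du|^{p-2}Du$ in $L^{p'}$. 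A final H\"older estimate shows $\|A(u_n)-A(u)\|_*\to 0$.

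For \emph{strict monotonicity}, I would use the standard vector inequality
$$(|a|^{p-2}a-|b|^{p-2}b,a-b)_{\RR^N}\geq 0\quad\text{for all }a,b\in\RR^N,$$
with equality only when $a=b$ (for $1<p<\infty$). Integrating over $\Omega$ gives $\langle A(u)-A(v),u-v\rangle\geq 0$. If equality holds, then $Du=Dv$ almost everywhere, and by the Poincar\'e inequality $u=v$ in $W^{1,p}_0(\Omega)$.

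The \emph{$(S)_+$ property} is the main obstacle. Assume $u_n\stackrel{w}{\to}u$ and $\limsup_{n\to\infty}\langle A(u_n),u_n-u\rangle\leq 0$. By monotonicity, $\langle A(u_n),u_n-u\rangle\geq\langle A(u),u_n-u\rangle$, whose right-hand side vanishes in the limit since $u_n\stackrel{w}{\to}u$; hence $\lim\langle A(u_n),u_n-u\rangle=0$, and therefore also $\lim\langle A(u_n)-A(u),u_n-u\rangle=0$. To upgrade this to strong convergence, I would invoke the quantitative refinements of the above vector inequality: for $p\geq 2$ there is $c_p>0$ with
$$(|a|^{p-2}a-|b|^{p-2}b,a-b)_{\RR^N}\geq c_p|a-b|^p,$$
yielding $c_p\|u_n-u\|^p\leq\langle A(u_n)-A(u),u_n-u\rangle\to 0$ directly. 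For $1<p<2$ one has instead
$$(|a|^{p-2}a-|b|^{p-2}b,a-b)_{\RR^N}\geq c_p\frac{|a-b|^2}{(|a|+|b|)^{2-p}},$$
and I would combine this with a H\"older estimate using exponents $2/p$ and $2/(2-p)$ together with the uniform bound on $\|u_n\|$ (from weak convergence) to deduce $\|D(u_n-u)\|_p\to 0$. Either way, $u_n\to u$ in $W^{1,p}_0(\Omega)$. The subcritical case $1<p<2$ is where one must be most careful, since the monotonicity estimate is not directly coercive in the $L^p$-norm and the H\"older splitting is essential.
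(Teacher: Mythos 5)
Your proof is correct. Note that the paper does not prove this proposition at all: it is quoted as a known result with a reference to Motreanu, Motreanu \& Papageorgiou \cite[p.~40]{15}, so there is no internal argument to compare against; your proof is essentially the standard one found in such references (H\"older for boundedness, Krasnoselskii continuity of the Nemytskii map $\xi\mapsto|\xi|^{p-2}\xi$ from $L^p$ to $L^{p'}$, the Simon-type vector inequalities for strict monotonicity and for the $(S)_+$ property, with the case split $p\geq 2$ versus $1<p<2$). Two small points: in the singular case $1<p<2$ you should interpret the quotient $|a-b|^2/(|a|+|b|)^{2-p}$ as $0$ on the set where $Du_n=Du=0$ before applying the H\"older splitting (harmless, since the numerator vanishes there too); and there is a slightly slicker route to $(S)_+$ that avoids the case distinction altogether: from $\langle A(u_n),u_n-u\rangle\rightarrow 0$ one gets $\|Du_n\|_p^{p}\leq\|Du_n\|_p^{p-1}\|Du\|_p+o(1)$, hence $\limsup_n\|Du_n\|_p\leq\|Du\|_p$, while weak lower semicontinuity gives the reverse inequality; then $Du_n\stackrel{w}{\rightarrow}Du$ in $L^p(\Omega,\RR^N)$ together with convergence of norms and the uniform convexity (Radon--Riesz property) of $L^p$ yields $Du_n\rightarrow Du$ strongly. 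Either way the conclusion holds, so your argument stands as a complete proof of the cited proposition.
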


Consider the following nonlinear eigenvalue problem
\begin{equation}\label{eq1}
	-\Delta_pu(z)=\hat{\lambda}|u(z)|^{p-2}u(z)\ \mbox{in}\ \Omega,\ u|_{\partial\Omega}=0.
\end{equation}

We say that $\hat{\lambda}\in\RR$ is an ``eigenvalue" of ($-\Delta_p,W^{1,p}_0(\Omega)$) if problem (\ref{eq1}) admits a nontrivial solution $\hat{u}\in W^{1,p}_0(\Omega)$, known as an ``eigenfunction" corresponding to $\hat{\lambda}$. The nonlinear regularity theory (see Gasinski \& Papageorgiou \cite[pp. 737-738]{3}) implies that $\hat{u}\in C^1_0(\overline{\Omega})$. There is a smallest eigenvalue $\hat{\lambda}_1>0$ with the following properties:
\begin{itemize}
	\item $\hat{\lambda}_1>0$ is isolated (that is, if $\hat{\sigma}(p)$ denotes the spectrum of ($-\Delta_p,W^{1,p}_0(\Omega)$) then we can find $\epsilon>0$ such that $(\hat{\lambda}_1,\hat{\lambda}_1+\epsilon)\cap\hat{\sigma}(p)=0$);
	\item $\hat{\lambda}_1$ is simple (that is, if $\hat{u},\hat{v}\in C^1_0(\overline{\Omega})$ are eigenfunctions corresponding to $\hat{\lambda}_1$, then $\hat{u}=\xi\hat{v}$ for some $\xi\in \RR\backslash\{0\}$);
	\begin{equation}\label{eq2}
		\bullet\hspace{3cm}\hat{\lambda}_1=\inf\left\{\frac{||Du||^p_p}{||u||^p_p}:u\in W^{1,p}_0(\Omega),u\neq 0\right\}.\hspace{4cm}
	\end{equation}
\end{itemize}

It follows from the above properties that the eigenfunctions corresponding to $\hat{\lambda}_1$ do not change sign. We denote by $\hat{u}_1$ the positive, $L^p$-normalized (that is, $||\hat{u}_1||_p=1$) eigenfunction corresponding to $\hat{\lambda}_1>0$. From the nonlinear maximum principle (see, for example, Gasinski \& Papageorgiou \cite[p. 738]{3}), we have $\hat{u}_1\in {\rm int}\, C_+$. Any eigenfunction corresponding to an eigenvalue $\hat{\lambda}\neq\hat{\lambda}_1$, is nodal (that is, sign-changing). More details about the spectrum of $(-\Delta_p,W^{1,p}_0(\Omega))$ can be found in \cite{3, 15}.

We can also consider a weighted version of the eigenvalue problem (\ref{eq1}). So, let $m\in L^{\infty}(\Omega)$, $m(z)\geq 0$ for almost all $z\in\Omega,\ m\neq 0$. We consider the following nonlinear eigenvalue problem:
\begin{equation}\label{eq3}
	-\Delta_pu(z)=\tilde{\lambda}m(z)|u(z)|^{p-2}u(z)\ \mbox{in}\ \Omega,\ u|_{\partial\Omega}=0.
\end{equation}

This problem has the same properties as (\ref{eq1}). So, there is a smallest eigenvalue $\tilde{\lambda}_1(m)>0$ which is isolated, simple and admits the following variational characterization
$$\tilde{\lambda}_1(m)=\inf\left\{\frac{||Du||^p_p}{\int_{\Omega}m(z)|u|^pdz}:u\in W^{1,p}_0(\Omega),u\neq 0\right\}.$$

Also the eigenfunctions corresponding to $\tilde{\lambda}_1(m)$ have a fixed sign and we denote by $\tilde{u}_1(m)$  the positive, $L^p$-normalized eigenfunction. We have $\tilde{u}_1(m)\in {\rm int}\, C_+$. These properties lead to the following monotonicity property of the map $m\mapsto\tilde{\lambda}_1(m)$.

\begin{prop}\label{prop4}
	If $m_1,m_2\in L^{\infty}(\Omega),0\leq m_1(z)\leq m_2(z)$ for almost all $z\in\Omega$ and both inequalities are strict on the sets of positive measure, then $\tilde{\lambda}_1(m_2)<\tilde{\lambda}_1(m_1)$.
\end{prop}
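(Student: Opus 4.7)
The plan is to prove strict monotonicity by plugging the positive normalized first eigenfunction for $m_1$ into the Rayleigh-type quotient characterizing $\tilde{\lambda}_1(m_2)$, and then leveraging the fact (stated just before the proposition) that $\tilde{u}_1(m_1)\in {\rm int}\, C_+$ to upgrade a weak inequality to a strict one.

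More concretely, let $\tilde{u}_1(m_1)\in {\rm int}\, C_+$ denote the positive $L^p$-normalized eigenfunction associated with $\tilde{\lambda}_1(m_1)$, so that
$$\tilde{\lambda}_1(m_1)=\frac{\|D\tilde{u}_1(m_1)\|_p^p}{\int_\Omega m_1(z)|\tilde{u}_1(m_1)|^p\,dz}.$$
First, I would apply the variational characterization of $\tilde{\lambda}_1(m_2)$ with the test function $\tilde{u}_1(m_1)\neq 0$ to get
$$\tilde{\lambda}_1(m_2)\leq \frac{\|D\tilde{u}_1(m_1)\|_p^p}{\int_\Omega m_2(z)|\tilde{u}_1(m_1)|^p\,dz}.$$
Second, I would compare the denominators: since $m_1\leq m_2$ a.e.\ on $\Omega$, the numerator is the same while the denominator on the right only grows when passing from $m_1$ to $m_2$, so the expression is bounded above by $\tilde{\lambda}_1(m_1)$.

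The main (and really only) point requiring care is to turn the weak inequality into a strict one. Here I would use that $\tilde{u}_1(m_1)\in {\rm int}\,C_+$, which gives $\tilde{u}_1(m_1)(z)>0$ for every $z\in\Omega$, hence $|\tilde{u}_1(m_1)(z)|^p>0$ pointwise in $\Omega$. Let $E\subseteq\Omega$ be the set of positive Lebesgue measure on which $m_1(z)<m_2(z)$ (which exists by hypothesis). Then on $E$ we have $m_2(z)|\tilde{u}_1(m_1)|^p>m_1(z)|\tilde{u}_1(m_1)|^p$ pointwise, while on $\Omega\setminus E$ the integrands are equal almost everywhere; integrating gives the strict inequality
$$\int_\Omega m_2(z)|\tilde{u}_1(m_1)|^p\,dz > \int_\Omega m_1(z)|\tilde{u}_1(m_1)|^p\,dz.$$
Chaining the two steps yields $\tilde{\lambda}_1(m_2)<\tilde{\lambda}_1(m_1)$, which is the claim. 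I do not anticipate any substantive obstacle: the nontrivial input is the strict positivity of the first eigenfunction in $\Omega$, which is precisely the content of $\tilde{u}_1(m_1)\in {\rm int}\,C_+$ recorded in the preceding paragraph.
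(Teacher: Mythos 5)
The paper states Proposition \ref{prop4} without proof, treating it as a known preliminary fact about the weighted $p$-Laplacian eigenvalue (see \cite{3,15}). Your argument is the standard proof of this monotonicity, and it is correct: you test the Rayleigh-type characterization of $\tilde{\lambda}_1(m_2)$ with $\tilde{u}_1(m_1)$, which is admissible because $\tilde{u}_1(m_1)\in{\rm int}\,C_+$ guarantees $\int_\Omega m_2|\tilde{u}_1(m_1)|^p\,dz>0$, and then the strict pointwise positivity of $\tilde{u}_1(m_1)$ in $\Omega$ combined with $m_1<m_2$ on a set of positive measure forces
$$\int_\Omega m_2|\tilde{u}_1(m_1)|^p\,dz>\int_\Omega m_1|\tilde{u}_1(m_1)|^p\,dz,$$
which turns the weak inequality into a strict one after dividing. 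The one hypothesis you don't use explicitly, that $m_1>0$ on a set of positive measure, is precisely what makes $\tilde{\lambda}_1(m_1)$ well-defined (it guarantees $m_1\not\equiv 0$) and hence that the quotient defining $\tilde{\lambda}_1(m_1)$ is actually attained at $\tilde{u}_1(m_1)$; it is worth noting this if you want a fully self-contained write-up, but it is not a gap in the logic. No substantive issues.
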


Given $x\in\RR$, we set $x^{\pm}=\max\{\pm x, 0\}$. Then for $u\in W^{1,p}_0(\Omega)$, we set $u^{\pm}(\cdot)=u(\cdot)^{\pm}$. We know that
$$u^{\pm}\in W^{1,p}_0(\Omega),\ |u|=u^++u^-,\ u=u^+-u^-.$$

If $g:\Omega\times\RR$ is a measurable function (for example, a Carath\'eodory function) then by $N_g(\cdot)$ we denote the Nemytski map corresponding to $g(\cdot,\cdot)$ defined by
$$N_g(u)(\cdot)=g(\cdot,u(\cdot))\ \mbox{for all}\ u\in W^{1,p}_0(\Omega).$$

Given $v,u\in W^{1,p}_0(\Omega)$ with $v\leq u$, we define the order interval $[v,u]$ by
$$[v,u]=\{y\in W^{1,p}_0(\Omega):v(z)\leq y(z)\leq u(z)\ \mbox{for almost all}\ z\in\Omega\}.$$

The hypotheses on the perturbation $f(z,x)$ are the following:

\smallskip
$H(f):$ $f:\Omega\times\RR\leftarrow\RR$ is a Carath\'eodory function such that $f(z,0)=0$ for almost all $z\in\Omega$ and
\begin{itemize}
	\item[(i)] for every $\rho>0$, there exists $a_{\rho}\in L^{\infty}(\Omega)$ such that
	$$|f(z,x)|\leq a_{\rho}(z)\ \mbox{for almost all}\ z\in\Omega,\ \mbox{and all}\ 0\leq x\leq\rho;$$
	\item[(ii)] $\hat{\lambda}_1<\eta\leq\liminf\limits_{x\rightarrow+\infty}\frac{f(z,x)}{x^{p-1}}\leq\limsup\limits_{x\rightarrow+\infty}\frac{f(z,x)}{x^{p-1}}\leq\hat{\eta}$ uniformly for almost all $z\in\Omega;$
	\item[(iii)] there exists a function $w\in C^1(\overline{\Omega})$ such that
	$$w(z)\geq c_0>0\ \mbox{for all}\ z\in\overline{\Omega},\ \Delta_pw\in L^{\infty}(\Omega)\ \mbox{with}\ \Delta_pw(z)\leq 0\ \mbox{for almost all}\ z\in\Omega,$$
	and for every compact $K\subseteq\Omega$ we can find $c_K>0$ such that
	$$w(z)^{-\gamma}+f(z,w(z))\leq-c_K<0\ \mbox{for almost all}\ z\in K;$$
	\item[(iv)] there exists $\delta_0\in(0,c_0)$ such that for every compact $K\subseteq\Omega$
	$$f(z,x)\geq\hat{c}_K>0\ \mbox{for almost all}\ z\in K,\ \mbox{and all}\ x\in\left(0,\delta_0\right];$$
	\item[(v)] for every $\rho>0$, there exists $\hat{\xi}_{\rho}>0$ such that for almost all $z\in\Omega$ the function
	$$x\mapsto f(z,x)+\hat{\xi}_{\rho}x^{p-1}$$
	is nondecreasing on $[0,\rho]$.
\end{itemize}

\begin{remark}
	Since we are looking for positive solutions and all the above hypotheses concern the positive semiaxis $\RR_+=\left[0,+\infty\right)$, we may assume without any loss of generality that
	\begin{equation}\label{eq4}
		f(z,x)=0\ \mbox{for almost all}\ z\in\Omega,\ \mbox{and all}\ x\leq 0.
	\end{equation}
\end{remark}

Hypothesis $H(f)(iii)$ implies that asymptotically at $+\infty$ we have uniform nonresonance with respect to the principal eigenvalue $\hat{\lambda}_1>0$ of $(-\Delta_p,W^{1,p}_0(\Omega))$. The resonant case was recently examined  for nonparametric singular Dirichlet problems by Papageorgiou, R\u{a}dulescu \& Repov\v{s} \cite{16}.

\begin{ex}
The following functions satisfy hypotheses $H(f)$. For the sake of simplicity we drop the $z$-dependence:
$$f(x)=\left\{\begin{array}{ll}
	x^{\tau-1}-3x^{\vartheta-1}&\mbox{if}\ 0\leq x\leq 1\\
	\eta x^{p-1}-(\eta+2)x^{q-1}&\mbox{if}\ 1<x
\end{array}\right\}\ (\mbox{see (\ref{eq4})})$$
with $1<\tau<\vartheta$, $1<q<p$ and $\eta>\hat{\lambda}_1$; and

\textcolor{black}{
$$f(x)=\left\{\begin{array}{ll}
	2\sin(2\pi x)&\mbox{if}\ 0\leq x\leq 1\\
	\eta (x^{p-1}-x^{q-1})&\mbox{if}\ 1<x
\end{array}\right.$$
with $\eta>\hat{\lambda}_1$, $1<q<p$. }
\end{ex}

\section{A purely singular problem}

In this section we deal with the following purely singular parametric problem:
\begin{equation}\tag{$Au_{\lambda}$}\label{eqa}
	\left\{\begin{array}{l}
		-\Delta_pu(z)=\lambda u(z)^{-\gamma}\ \mbox{in}\ \Omega\\
		u|_{\partial\Omega}=0,\ u>0,\ \lambda>0,\ 0<\gamma<1.
	\end{array}\right\}
\end{equation}

The next proposition establishes the existence and $\lambda$-dependence of the positive solutions for problem \eqref{eqa}.
\begin{prop}\label{prop5}
	For every $\lambda>0$ problem \eqref{eqa} admits a unique solution $\tilde{u}_{\lambda}\in {\rm int}\, C_+$, the map $\lambda\mapsto\tilde{u}_{\lambda}$ is nondecreasing from $(0,\infty)$ into $C^1_0(\overline{\Omega})$ (that is, if $0<\vartheta<\lambda$, then $\tilde{u}_{\vartheta}\leq\tilde{u}_{\lambda}$) and $||\tilde{u}_{\lambda}||_{C^1_0(\overline{\Omega})}\rightarrow 0$ as $\lambda\rightarrow 0^+$.
\end{prop}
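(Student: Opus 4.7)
I plan to establish the well-posedness of \eqref{eqa} by the standard recipe for singular $p$-Laplacian problems: regularize the singularity, obtain smooth approximate solutions by direct minimization, pass to a monotone limit, then use the D\'iaz--Saa identity for uniqueness and weak comparison for $\lambda$-monotonicity; finally combine an energy estimate with nonlinear regularity to get $C^1_0(\overline\Omega)$-convergence to $0$.

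\emph{Existence.} For $\epsilon>0$ I would first solve the regularized problem
$-\Delta_pu=\lambda(u^++\epsilon)^{-\gamma}$ in $\Omega$, $u|_{\partial\Omega}=0$. Its energy functional is $C^1$, coercive and sequentially weakly lower semicontinuous on $W^{1,p}_0(\Omega)$, so it has a global minimizer $u_\epsilon$. Testing with $-u_\epsilon^-$ gives $u_\epsilon\geq0$; nonlinear regularity (Gasinski--Papageorgiou \cite{3}) then yields $u_\epsilon\in C^1_0(\overline\Omega)$, and V\'azquez's strong maximum principle upgrades this to $u_\epsilon\in\mathrm{int}\,C_+$. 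Weak comparison (using the strict monotonicity of $A$ from Proposition~\ref{prop3}) shows that $\epsilon\mapsto u_\epsilon$ is nonincreasing, and testing with $u_\epsilon$ together with Poincar\'e and the elementary bound $u_\epsilon(u_\epsilon+\epsilon)^{-\gamma}\leq u_\epsilon^{1-\gamma}$ produces a uniform $W^{1,p}_0$-bound. The pointwise monotone limit $\tilde u_\lambda:=\lim_{\epsilon\downarrow0}u_\epsilon$ inherits this bound; comparison with a fixed $u_{\epsilon_0}\in\mathrm{int}\,C_+$ provides local positivity, so $\lambda\tilde u_\lambda^{-\gamma}$ is locally integrable and one can pass to the limit in the weak formulation to see that $\tilde u_\lambda$ solves \eqref{eqa}. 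Regularity and Hopf again give $\tilde u_\lambda\in\mathrm{int}\,C_+$.

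\emph{Uniqueness and monotonicity in $\lambda$.} If $u_1,u_2\in\mathrm{int}\,C_+$ both solve \eqref{eqa}, the D\'iaz--Saa inequality yields
$$0\leq\int_\Omega\left[\frac{-\Delta_pu_1}{u_1^{p-1}}-\frac{-\Delta_pu_2}{u_2^{p-1}}\right](u_1^p-u_2^p)\,dz=\lambda\int_\Omega\left[u_1^{-\gamma-(p-1)}-u_2^{-\gamma-(p-1)}\right](u_1^p-u_2^p)\,dz.$$
Since $t\mapsto t^{-\gamma-(p-1)}$ is strictly decreasing on $(0,\infty)$, the integrand is pointwise $\leq0$ and is $<0$ wherever $u_1\neq u_2$; hence $u_1\equiv u_2$. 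For $0<\vartheta<\lambda$ I would test $A(\tilde u_\vartheta)-A(\tilde u_\lambda)$ against $(\tilde u_\vartheta-\tilde u_\lambda)^+\in W^{1,p}_0(\Omega)$: on the set $\{\tilde u_\vartheta>\tilde u_\lambda\}$ one has $\vartheta\tilde u_\vartheta^{-\gamma}-\lambda\tilde u_\lambda^{-\gamma}\leq(\vartheta-\lambda)\tilde u_\lambda^{-\gamma}<0$, so strict monotonicity of $A$ forces this set to be null, i.e.\ $\tilde u_\vartheta\leq\tilde u_\lambda$.

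\emph{The limit as $\lambda\to0^+$.} Fix $\lambda_0>0$. For $\lambda\leq\lambda_0$ the just-proved monotonicity gives $\tilde u_\lambda\leq\tilde u_{\lambda_0}$, so $\{\tilde u_\lambda\}$ is uniformly bounded in $L^\infty(\Omega)$ and
$$||\tilde u_\lambda||^p=\lambda\int_\Omega\tilde u_\lambda^{1-\gamma}\,dz\leq\lambda\int_\Omega\tilde u_{\lambda_0}^{1-\gamma}\,dz\longrightarrow0,$$
so $\tilde u_\lambda\to0$ in $W^{1,p}_0(\Omega)$. The global $C^{1,\alpha}_0(\overline\Omega)$-regularity theory for singular problems (Giacomoni--Schindler--Taka\v c \cite{7}) then supplies a uniform $C^{1,\alpha}_0(\overline\Omega)$-bound on $\{\tilde u_\lambda:\lambda\leq\lambda_0\}$; via the compact embedding $C^{1,\alpha}_0(\overline\Omega)\hookrightarrow C^1_0(\overline\Omega)$ the $W^{1,p}_0$-convergence to $0$ is promoted to the desired $C^1_0(\overline\Omega)$-convergence. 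The main obstacle is really the first step: one has to secure simultaneously local positivity and uniform integrability of the singular term $\tilde u_\lambda^{-\gamma}$ in order to pass it through the weak formulation, and then appeal to a boundary regularity theorem strong enough to accommodate such a singular right-hand side. The D\'iaz--Saa, weak comparison and energy steps are then essentially algebraic.
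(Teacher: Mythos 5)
Your argument is correct, and it takes a genuinely different route from the paper's on several steps. For existence and uniqueness the paper simply cites Proposition~5 of Papageorgiou \& Smyrlis \cite{17}, whereas you re-derive them (regularization $-\Delta_pu=\lambda(u^++\epsilon)^{-\gamma}$, monotone passage $\epsilon\downarrow0$, then D\'iaz--Saa); this is self-contained but longer, and it requires you to justify passing the singular term to the limit, which the citation spares the paper. For the $\lambda$-monotonicity the paper does not test the singular equations directly: it first establishes $\tilde u_\vartheta^{-\gamma}\in L^{p'}(\Omega)$ (via Marano--Papageorgiou and the Lazer--McKenna lemma), introduces the truncation $g_\lambda$ of (\ref{eq5}), minimizes the associated $C^1$ functional to produce a solution $\bar u_\lambda$ of the $\lambda$-problem satisfying $\tilde u_\vartheta\le\bar u_\lambda$, and then invokes uniqueness to conclude $\bar u_\lambda=\tilde u_\lambda$. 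Your shortcut of pairing $A(\tilde u_\vartheta)-A(\tilde u_\lambda)$ with $(\tilde u_\vartheta-\tilde u_\lambda)^+$ is legitimate, but it silently uses the same $L^{p'}$-integrability of the singular term to admit $W^{1,p}_0(\Omega)$ test functions, so you should make that explicit; the paper's truncation device is precisely a way to sidestep this admissibility issue by working with a nonsingular functional. Finally, for $\|\tilde u_\lambda\|_{C^1_0(\overline\Omega)}\to0$, your energy estimate relies on the just-proved monotonicity ($\tilde u_\lambda\le\tilde u_{\lambda_0}$), while the paper gets a bound independently from the Hewitt--Stromberg inequality; and where you appeal to Giacomoni--Schindler--Taka\v c \cite{7} for a uniform $C^{1,\alpha}_0(\overline\Omega)$ bound, the paper builds it by hand: it shows $\tilde u_\lambda^{-\gamma}\in L^r(\Omega)$ for some $r>N$, applies Guedda--V\'eron \cite{8} for a uniform $L^\infty$ bound, solves the auxiliary linear problem (\ref{eq10}) to produce a vector field $\beta_\lambda\in C^{0,\alpha}$, and then invokes Lieberman \cite{13} on $-\mathrm{div}\,[|D\tilde u_\lambda|^{p-2}D\tilde u_\lambda-\beta_\lambda]=0$. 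Both regularity routes are standard and end with the same compact embedding $C^{1,s}_0(\overline\Omega)\hookrightarrow C^1_0(\overline\Omega)$.
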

\begin{proof}
	The existence of a unique solution $\tilde{u}_{\lambda}\in {\rm int}\, C_+$ follows from Proposition 5 of Papageorgiou \& Smyrlis \cite{17}.
	
	Let $0<\vartheta<\lambda$ and let $\tilde{u}_{\vartheta},\tilde{u}_{\lambda}\in {\rm int}\, C_+$ be the corresponding unique solutions of problem \eqref{eqa}. Evidently, $\tilde{u}^{p'}_{\vartheta}\in {\rm int}\, C_+\left(\frac{1}{p}+\frac{1}{p'}=1\right)$ and so by Proposition 2.1 of Marano \& Papageorgiou \cite{14}, we can find $c_1>0$ such that
	\begin{eqnarray*}
		&&\hat{u}_1\leq c_1\tilde{u}^{p'}_{\vartheta},\\
		&\Rightarrow&\hat{u}_1^{1/p'}\leq c_1^{1/p'}\tilde{u}_{\vartheta},\\
		&\Rightarrow&\tilde{u}^{-\gamma}_{\vartheta}\leq c_2\hat{u}_1^{-\gamma/p'}\ \mbox{for some}\ c_2>0.
	\end{eqnarray*}
	
	 Lemma of Lazer \& McKenna \cite[p. 726]{12}, implies that $\hat{u}_1^{-\gamma/p'}\in L^{p'}(\Omega)$. Therefore $\tilde{u}_{\vartheta}^{-\gamma}\in L^{p'}(\Omega)$. We introduce the Carath\'eodory function $g_{\lambda}(z,x)$ defined by
	\begin{equation}\label{eq5}
		g_{\lambda}(z,x)=\left\{\begin{array}{ll}
			\lambda\tilde{u}_{\vartheta}^{-\gamma}&\mbox{if}\ x\leq\tilde{u}_{\vartheta}(z)\\
			\lambda x^{-\gamma}&\mbox{if}\ \tilde{u}_{\vartheta}(z)<x.
		\end{array}\right.
	\end{equation}
	
	We set $G_{\lambda}(z,x)=\int^x_0g_{\lambda}(z,s)ds$ and consider the functional $\hat{\psi}_{\lambda}:W^{1,p}_0(\Omega)\rightarrow\RR$ defined by
	$$\hat{\psi}_{\lambda}(u)=\frac{1}{p}||Du||^p_p-\int_{\Omega}G_{\lambda}(z,u)dz\ \mbox{for all}\ u\in W^{1,p}_0(\Omega).$$
	
	Proposition 3 of Papageorgiou \& Smyrlis \cite{17} implies that $\hat{\psi}_{\lambda}\in C^1(W^{1,p}_0(\Omega))$. From (\ref{eq5}) and since $\tilde{u}_{\vartheta}^{-\gamma}\in L^{p'}(\Omega)$ it follows that $\hat{\psi}_{\lambda}(\cdot)$ is coercive. Also, via the Sobolev embedding theorem, we see that $\hat{\psi}_{\lambda}(\cdot)$ is sequentially weakly lower semicontinuous. So, by the Weierstrass-Tonelli theorem, we can find $\bar{u}_{\lambda}\in W^{1,p}_0(\Omega)$ such that
		\begin{eqnarray}\label{eq6}
			&&\hat{\psi}_{\lambda}(\bar{u}_{\lambda})=\inf\{\hat{\psi}_{\lambda}(u):u\in W^{1,p}_0(\Omega)\},\nonumber\\
			&\Rightarrow&\hat{\psi}'_{\lambda}(\bar{u}_{\lambda})=0,\nonumber\\
			&\Rightarrow&\left\langle A(\bar{u}_{\lambda}),h\right\rangle=\int_{\Omega}g_{\lambda}(z,\bar{u}_{\lambda})hdz\ \mbox{for all}\ h\in W^{1,p}_0(\Omega).
		\end{eqnarray}
		
		In (\ref{eq3}) we choose $h=(\tilde{u}_{\vartheta}-\bar{u}_{\lambda})^+\in W^{1,p}_0(\Omega)$. We have
		\begin{eqnarray}\label{eq7}
			\left\langle A(\bar{u}_{\lambda}),(\tilde{u}_{\vartheta}-\bar{u}_{\lambda})^+\right\rangle&=&\int_{\Omega}\lambda\tilde{u}_{\vartheta}^{-\gamma}(\tilde{u}_{\vartheta}-\bar{u}_{\lambda})^+dz\ (\mbox{see (\ref{eq5})})\nonumber\\
			&\geq&\int_{\Omega}\vartheta\tilde{u}_{\vartheta}^{-\gamma}(\tilde{u}_{\vartheta}-\bar{u}_{\lambda})dz\ (\mbox{since}\ \vartheta<\lambda)\nonumber\\
			&=&\left\langle A(\tilde{u}_{\vartheta}),(\tilde{u}_{\vartheta}-\bar{u}_{\lambda})^+\right\rangle,\nonumber\\
			\Rightarrow\tilde{u}_{\vartheta}\leq\bar{u}_{\lambda}.
		\end{eqnarray}
		
		From (\ref{eq5}), (\ref{eq6}), (\ref{eq7}), we have
		\begin{eqnarray*}
			&&-\Delta_p\bar{u}_{\lambda}(z)=\lambda\bar{u}_{\lambda}(z)^{-\gamma}\ \mbox{for almost all}\ z\in\Omega,\left.\bar{u}_{\lambda}\right|_{\partial\Omega}=0,\\
			&\Rightarrow&\bar{u}_{\lambda}=\tilde{u}_{\lambda},\\
			&\Rightarrow&\tilde{u}_{\vartheta}\leq\tilde{u}_{\lambda}\ (\mbox{see (\ref{eq7})}).
		\end{eqnarray*}
		
		Therefore the map $\lambda\mapsto\tilde{u}_{\lambda}$ is nondecreasing from $(0,+\infty)$ into $C^1_0(\overline{\Omega})$.
		
		We have
		$$\left\langle A(\tilde{u}_{\lambda}),h\right\rangle=\int_{\Omega}\lambda\tilde{u}_{\lambda}^{-\gamma}hdz\ \mbox{for all}\ h\in W^{1,p}_0(\Omega).$$
		
		Choosing $h=\tilde{u}_{\lambda}\in W^{1,p}_0(\Omega)$, we obtain
		\begin{eqnarray}\label{eq8}
			&&||D\tilde{u}_{\lambda}||^p_p=\lambda\int_{\Omega}\tilde{u}_{\lambda}^{1-\gamma}dz\leq \lambda c_3||\tilde{u}_{\lambda}||_p\ \mbox{for some}\ c_3>0\nonumber\\
			&&(\mbox{see Theorem 13.17 of Hewitt \& Stromberg \cite[p. 196]{9}}),\nonumber\\
			&\Rightarrow&\{\tilde{u}_{\lambda}\}_{\lambda\in\left(0,1\right]}\subseteq W^{1,p}_0(\Omega)\ \mbox{is bounded and }||\tilde{u}_{\lambda}||\rightarrow 0\ \mbox{as}\ \lambda\rightarrow 0^+.
		\end{eqnarray}
		
		As in the first part of the proof, using Proposition 2.1 of Marano \& Papageorgiou \cite{14}, we show that $\tilde{u}_{\lambda}^{-\gamma}\in L^r(\Omega)$ for $r>N$. Then Proposition 1.3 of Guedda \& V\'eron \cite{8} implies that
		\begin{equation}\label{eq9}
			\tilde{u}_{\lambda}\in L^{\infty}(\Omega)\ \mbox{and}\ ||\tilde{u}_{\lambda}||_{\infty}\leq c_4\ \mbox{for some}\ c_4>0,\ \mbox{and all}\ 0<\lambda\leq 1.
		\end{equation}
		
		Let $k_{\lambda}=\lambda\tilde{u}^{-\gamma}_{\lambda}\in L^r(\Omega),\lambda\in\left(0,1\right]$ and consider the following linear Dirichlet problem
		\begin{equation}\label{eq10}
			-\Delta v(z)=k_{\lambda}(z)\ \mbox{in}\ \Omega,\ v|_{\partial\Omega}=0,\ 0<\lambda\leq 1.
		\end{equation}
		
		Standard existence and regularity theory (see, for example, Struwe \cite[p. 218]{19}), implies that problem (\ref{eq10}) has a unique solution $v_{\lambda}(\cdot)$ such that
		$$v_{\lambda}\in W^{2,r}(\Omega)\subseteq C^{1,\alpha}_{0}(\overline{\Omega})=C^{1,\alpha}(\overline{\Omega})\cap C^1_0(\overline{\Omega}),\ ||v_{\lambda}||_{C^{1,\alpha}_0(\overline{\Omega})}\leq c_5$$
		for some $c_5>0$, all $\lambda\in\left(0,1\right]$, and with $\alpha=1-\frac{N}{r}\in(0,1)$ (recall that $r>N$). Let $\beta_{\lambda}(z)=Dv_{\lambda}(z)$. Then $\beta_{\lambda}\in C^{0,\alpha}(\overline{\Omega})$ for every $\lambda\in\left(0,1\right]$. We have
		$$-{\rm div}\,[|D\tilde{u}_{\lambda}|^{p-2}D\tilde{u}_{\lambda}-\beta_{\lambda}]=0\ \mbox{in}\ \Omega,\left.\ \tilde{u}_{\lambda}\right|_{\partial\Omega}=0\ (\mbox{since}\ \tilde{u}_{\lambda}\ \mbox{solves}\ \eqref{eqa}).$$
		
		Then Theorem 1 of Lieberman \cite{13} (see also Corollary 1.1 of Guedda \& V\'eron \cite{8}) and (\ref{eq9}), imply that we can find $s\in(0,1)$ and $c_6>0$ such that
		$$\tilde{u}_{\lambda}\in C^{1,s}_0(\overline{\Omega})\cap {\rm int}\, C_+,\ ||\tilde{u}_{\lambda}||_{C^{1,s}_0(\overline{\Omega})}\leq c_6\ \mbox{for all}\ \lambda\in\left(0,1\right].$$
		
		Finally, the compact embedding of $C^{1,s}_0(\overline{\Omega})$ into $C^1_0(\overline{\Omega})$ and (\ref{eq8}) imply that
		$$||\tilde{u}_{\lambda}||_{C^1_0(\overline{\Omega})}\rightarrow 0\ \mbox{as}\ \lambda\rightarrow 0^+.$$
This completes the proof.
\end{proof}

\section{Bifurcation-type theorem}

Let $$\mathcal{L}=\{\lambda>0:\ \mbox{problem \eqref{eqp} admits a positive solution}\}$$ \begin{center}
$S_{\lambda}=\mbox{the set of positive solutions for problem \eqref{eqp}}$.
  \end{center}
\begin{prop}\label{prop6}
If hypotheses $H(f)$ hold, then $\mathcal{L}\neq\emptyset$.
\end{prop}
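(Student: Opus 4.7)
The plan is to produce a positive solution of \eqref{eqp} for every sufficiently small $\lambda>0$ by a sub--supersolution scheme. The lower solution will be the purely singular solution $\tilde{u}_\lambda\in {\rm int}\, C_+$ from Proposition~\ref{prop5}, and the upper solution will be the function $w$ supplied by hypothesis $H(f)(iii)$. I would first fix $\lambda_0\in(0,1]$ small enough that, for every $\lambda\in(0,\lambda_0]$, Proposition~\ref{prop5} yields $\tilde{u}_\lambda\leq\min\{\delta_0,c_0\}$. This gives $\tilde{u}_\lambda\leq w$ pointwise, $f(z,\tilde{u}_\lambda(z))\geq 0$ for almost all $z\in\Omega$ by applying $H(f)(iv)$ along a compact exhaustion of $\Omega$, and $\lambda w^{-\gamma}+f(z,w)\leq w^{-\gamma}+f(z,w)<0$ for almost all $z\in\Omega$ by the same exhaustion applied to $H(f)(iii)$ together with $\lambda\leq 1$.

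Next, I would fix $\lambda\in(0,\lambda_0]$ and truncate the reaction by
$$\hat{f}_\lambda(z,x)=\left\{\begin{array}{ll}
\lambda\tilde{u}_\lambda(z)^{-\gamma}+f(z,\tilde{u}_\lambda(z))&\mbox{if}\ x\leq\tilde{u}_\lambda(z),\\
\lambda x^{-\gamma}+f(z,x)&\mbox{if}\ \tilde{u}_\lambda(z)<x<w(z),\\
\lambda w(z)^{-\gamma}+f(z,w(z))&\mbox{if}\ x\geq w(z),
\end{array}\right.$$
set $\hat{F}_\lambda(z,x)=\int_0^x\hat{f}_\lambda(z,s)ds$, and consider
$$\varphi_\lambda(u)=\frac{1}{p}||Du||_p^p-\int_\Omega\hat{F}_\lambda(z,u)dz\ \mbox{for all}\ u\in W^{1,p}_0(\Omega).$$
Since $\tilde{u}_\lambda^{-\gamma}\in L^{p'}(\Omega)$ (as in the proof of Proposition~\ref{prop5}) and the alternatives in $\hat{f}_\lambda$ on $\{x\geq w(z)\}$ are in $L^\infty(\Omega)$, $\hat{f}_\lambda(z,\cdot)$ is $x$-uniformly dominated by an $L^{p'}$-function. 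Thus $\varphi_\lambda$ is well-defined, of class $C^1$ (via the argument of Proposition~3 of \cite{17}), coercive, and sequentially weakly lower semicontinuous, so it admits a global minimizer $u_\lambda\in W^{1,p}_0(\Omega)$.

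Finally, I would confine $u_\lambda$ to the order interval $[\tilde{u}_\lambda,w]$. Testing $\varphi_\lambda'(u_\lambda)=0$ against $(\tilde{u}_\lambda-u_\lambda)^+$ and using the inequality $\hat{f}_\lambda(z,u_\lambda)\geq\lambda\tilde{u}_\lambda^{-\gamma}$ on $\{u_\lambda<\tilde{u}_\lambda\}$ together with $\left\langle A(\tilde{u}_\lambda),h\right\rangle=\int_\Omega\lambda\tilde{u}_\lambda^{-\gamma}hdz$ (valid on all of $W^{1,p}_0(\Omega)$ by H\"older since $\tilde{u}_\lambda^{-\gamma}\in L^{p'}(\Omega)$) yields $\tilde{u}_\lambda\leq u_\lambda$ by the strict monotonicity of $A$ (Proposition~\ref{prop3}). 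Testing against $(u_\lambda-w)^+$, using Green's identity for $w$ (legitimate because $\Delta_p w\in L^\infty(\Omega)$) combined with $\Delta_p w\leq 0$ and the a.e.\ strict negativity of $\lambda w^{-\gamma}+f(z,w)$ in $\Omega$, then gives $u_\lambda\leq w$. On this interval the truncation is inactive, so $u_\lambda$ solves \eqref{eqp}; nonlinear regularity together with $u_\lambda\geq\tilde{u}_\lambda\in{\rm int}\, C_+$ places $u_\lambda\in{\rm int}\, C_+$, whence $\lambda\in\mathcal{L}$. I expect the main technical hurdle to be the upper-solution comparison: $H(f)(iii)$ is formulated only on compact subsets of $\Omega$, so one must first promote it to an a.e.\ strict inequality on $\Omega$ via exhaustion and then transport it through the weak formulation using $\Delta_p w\in L^\infty(\Omega)$; the rest follows the variational template already employed in Proposition~\ref{prop5}.
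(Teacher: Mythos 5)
Your proposal is correct and takes essentially the same route as the paper: same choice of $\lambda_0$ via Proposition~\ref{prop5}, same truncation pinning the reaction between $\tilde{u}_\lambda$ and $w$, minimization of the resulting coercive $C^1$ functional, and the same two test-function comparisons (against $(\tilde{u}_\lambda-u_\lambda)^+$ and $(u_\lambda-w)^+$) to confine the minimizer to $[\tilde{u}_\lambda,w]$, after which the truncation is inactive and nonlinear regularity gives $u_\lambda\in{\rm int}\,C_+$. The paper's displayed truncation (\ref{eq12}) even contains the same minor typo you silently corrected ($\tilde{u}_\lambda$ in place of $\hat{u}_\lambda$ in the first branch), so your write-up is if anything slightly cleaner.
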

\begin{proof}
Using Proposition \ref{prop5}, we can find $\lambda_0\in\left(0,1\right]$ such that
\begin{equation}\label{eq11}
\tilde{u}_{\lambda}(z)\in\left(0,\delta_0\right]\ \mbox{for all}\ z\in\Omega,\ \mbox{all}\ \lambda\in\left(0,\lambda_0\right].
\end{equation}

Here, $\delta_0>0$ is as postulated by hypothesis $H(f)(iv)$.

We fix $\lambda\in\left(0,\lambda_0\right]$ and we consider the following truncation of the reaction in problem \eqref{eqp}:
\begin{eqnarray}\label{eq12}
\hat{k}_{\lambda}(z,x)=\left\{\begin{array}{ll}
    \lambda\hat{u}_{\lambda}(z)^{-\gamma}+f(z,\tilde{u}_{\lambda}(z))&\mbox{if}\ x<\tilde{u}_{\lambda}(z)\\
    \lambda x^{-\gamma}+f(z,x)&\mbox{if}\ \tilde{u}_{\lambda}\leq x\leq w(z)\\
    \lambda w(z)^{-\gamma}+f(z,w(z))&\mbox{if}\ w(z)<x
\end{array}
\right.
\end{eqnarray}
(recall that $\delta_0<c_0\leq w(z)$ for all $z\in\overline{\Omega}$). This is a Carath\'eodory function. We set $\hat{K}_{\lambda}(z,x)=\int^x_0\hat{k}_{\lambda}(z,s)ds$ and consider the function $\hat{\varphi}_{\lambda}:W^{1,p}_0(\Omega)\rightarrow \RR$ defined by
$$\hat{\varphi}_{\lambda}(u)=\frac{1}{p}||Du||^p_p-\int_{\Omega}\hat{K}_{\lambda}(z,u)dz\ \mbox{for all}\ u\in W^{1,p}_0(\Omega).$$

As before, we have $\hat{\varphi}_{\lambda}\in C^1(W^{1,p}_0(\Omega))$. Also,  it follows from (\ref{eq12}) that
$$\hat{\varphi}(\cdot)\ \mbox{is coercive}.$$

In addition, we have that
$$\hat{\varphi}_{\lambda}(\cdot)\ \mbox{is sequentially lower semicontinuous}.$$

Therefore, we can find $\hat{u}_{\lambda}\in W^{1,p}_0(\Omega)$ such that
\begin{eqnarray}\label{eq13}
&&\hat{\varphi}_{\lambda}(\hat{u}_{\lambda})=\inf[\hat{\varphi}_{\lambda}(u):u\in W^{1,p}_0(\Omega)],\nonumber\\
&\Rightarrow &\hat{\varphi}'_{\lambda}(\hat{u}_{\lambda})=0,\nonumber\\
&\Rightarrow &\left\langle A(\hat{u}_{\lambda}),h\right\rangle=\int_{\Omega}\hat{k}_{\lambda}(z,\hat{u}_{\lambda})hdz\ \mbox{for all}\ h\in W^{1,p}_0(\Omega).
\end{eqnarray}

In (\ref{eq13}) we choose $h=(\tilde{u}_{\lambda}-\hat{u}_{\lambda})^+\in W^{1,p}_0(\Omega)$. Then
\begin{eqnarray*}
	\left\langle A(\hat{u}_{\lambda}),(\tilde{u}_{\lambda}-\hat{u}_{\lambda})^+\right\rangle&=&\int_{\Omega}[\lambda\tilde{u}_{\lambda}^{-\gamma}+f(z,\tilde{u}_{\lambda})](\tilde{u}_{\lambda}-\hat{u}_{\lambda})^+dz\ (\mbox{see (\ref{eq12})})\\
	&\geq&\int_{\Omega}\lambda\tilde{u}_{\lambda}^{-\gamma}(\tilde{u}_{\lambda}-\hat{u}_{\lambda})^+dz\\
	&&(\mbox{see (\ref{eq11}) and hypothesis H(f)(iv)})\\
	&=&\left\langle A(\tilde{u}_{\lambda}),(\tilde{u}_{\lambda}-\hat{u}_{\lambda})^+\right\rangle\ (\mbox{see Proposition \ref{prop5}}),\\
	\Rightarrow\tilde{u}_{\lambda}\leq\hat{u}_{\lambda}.&
\end{eqnarray*}

Next, we choose   $h=(\hat{u}_{\lambda}-w)^+\in W^{1,p}_0(\Omega)$ in (\ref{eq13}). Then
\begin{eqnarray*}
	\left\langle A(\hat{u}_{\lambda}),(\hat{u}_{\lambda}-w)^+\right\rangle&=&\int_{\Omega}[\lambda w^{-\gamma}+f(z,w)](\hat{u}_{\lambda}-w)^+dz\ (\mbox{see (\ref{eq12})})\\
	&\leq&\left\langle A(w),(\hat{u}_{\lambda}-w)^+\right\rangle
\end{eqnarray*}
(see hypothesis $H(f)(iii)$ and use the nonlinear Green identity, see \cite[p. 211]{3})
$$\Rightarrow\tilde{u}_{\lambda}\leq w.$$

So, we have proved that
\begin{equation}\label{eq14}
	\hat{u}_{\lambda}\in[\tilde{u}_{\lambda},w].
\end{equation}

Using (\ref{eq14}) and (\ref{eq12}), equation (\ref{eq13}) becomes
\begin{eqnarray}\label{eq15}
	&&\left\langle A(\hat{u}_{\lambda}),h\right\rangle=\int_{\Omega}[\lambda\hat{u}_{\lambda}^{-\gamma}+f(z,\hat{u}_{\lambda})]hdz\ \mbox{for all}\ h\in W^{1,p}_0(\Omega),\nonumber\\
	&\Rightarrow&-\Delta_p\hat{u}_{\lambda}(z)=\lambda\hat{u}_{\lambda}(z)^{-\gamma}+f(z,\hat{u}_{\lambda}(z))\ \mbox{for almost all}\ z\in\Omega,\left.\hat{u}_{\lambda}\right|_{\partial\Omega}=0.
\end{eqnarray}

From (\ref{eq14}), (\ref{eq15}) and Theorem 1 of Lieberman \cite{13}, we infer that
\begin{eqnarray*}
	&&\hat{u}_{\lambda}\in[\tilde{u}_{\lambda},w]\cap {\rm int}\, C_+,\\
	&\Rightarrow&\lambda\in\mathcal{L},\hat{u}_{\lambda}\in S_{\lambda}.
\end{eqnarray*}
This completes the proof.
\end{proof}

A byproduct of the above proof is the following corollary.
\begin{corollary}\label{cor7}
	If hypotheses $H(f)$ hold, then $S_{\lambda}\subseteq {\rm int}\, C_+$ for all $\lambda>0$.
\end{corollary}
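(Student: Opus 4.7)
The plan is to verify that any $u\in S_\lambda$ meets the regularity and boundary-positivity conditions already used at the end of the proof of Proposition~\ref{prop6}: once $\lambda u^{-\gamma}+f(z,u)\in L^r(\Omega)$ for some $r>N$, Theorem~1 of Lieberman~\cite{13} places $u$ in $C^{1,s}_0(\overline{\Omega})$, and a comparison of inward normal derivatives against a suitable subsolution in ${\rm int}\, C_+$ then gives $u\in {\rm int}\, C_+$.

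I would first establish $u\in L^\infty(\Omega)$. Hypotheses $H(f)(i),(ii)$ deliver a growth bound $|f(z,x)|\le c(1+x^{p-1})$ for a.e.\ $z\in\Omega$ and all $x\ge 0$, and a Moser iteration applied to $-\Delta_p u=\lambda u^{-\gamma}+f(z,u)$ (the singular term has the right sign for the upper bound) yields $\rho:=\|u\|_\infty<\infty$, so that $f(\cdot,u(\cdot))\in L^\infty(\Omega)$ by $H(f)(i)$.

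The crux is to control $u^{-\gamma}$ by exhibiting a pointwise lower bound on $u$. By Proposition~\ref{prop5}, pick $\mu\in(0,\lambda]$ small enough that $\|\tilde u_\mu\|_\infty\le\delta_0$, and test the difference of the equations satisfied by $\tilde u_\mu$ and $u$ with $(\tilde u_\mu-u)^+\in W^{1,p}_0(\Omega)$:
\[
\langle A(\tilde u_\mu)-A(u),(\tilde u_\mu-u)^+\rangle=\int_\Omega\bigl[\mu\tilde u_\mu^{-\gamma}-\lambda u^{-\gamma}-f(z,u)\bigr](\tilde u_\mu-u)^+\,dz.
\]
On the set $\{\tilde u_\mu>u\}$ we have $0<u<\tilde u_\mu\le\delta_0$, whence $f(z,u(z))\ge 0$ a.e.\ by $H(f)(iv)$, while $\mu\tilde u_\mu^{-\gamma}\le\lambda\tilde u_\mu^{-\gamma}\le\lambda u^{-\gamma}$. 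The right-hand side is therefore $\le 0$, but Proposition~\ref{prop3} forces the left-hand side to be $\ge 0$, so equality must hold and $(\tilde u_\mu-u)^+\equiv 0$, i.e.\ $\tilde u_\mu\le u$. Consequently $u^{-\gamma}\le\tilde u_\mu^{-\gamma}$, and the Marano--Papageorgiou/Lazer--McKenna chain used at the start of the proof of Proposition~\ref{prop5} yields $\tilde u_\mu^{-\gamma}\in L^r(\Omega)$ for some $r>N$. Lieberman's theorem then produces $u\in C^{1,s}_0(\overline{\Omega})$. Since $u\ge\tilde u_\mu$ on $\overline\Omega$ with $\tilde u_\mu\in {\rm int}\, C_+$ and both functions vanish on $\partial\Omega$, comparing inward normal derivatives at $\partial\Omega$ gives $\partial u/\partial n\le\partial\tilde u_\mu/\partial n<0$; combined with $u>0$ in $\Omega$, this delivers $u\in {\rm int}\, C_+$.

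The main difficulty is the lower-bound step. The obvious candidate $\tilde u_\lambda$ itself need not be a subsolution of \eqref{eqp}, because $f(z,\tilde u_\lambda(z))$ may be negative when $\tilde u_\lambda$ is large; the parameter shift to $\mu\le\lambda$, which forces $\tilde u_\mu\le\delta_0$ and thereby activates $H(f)(iv)$ on the entire set $\{\tilde u_\mu>u\}$, is precisely what closes the comparison.
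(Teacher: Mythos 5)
Your proof is correct and fills in exactly what the paper leaves implicit (the paper states the corollary as a "byproduct" of Proposition~\ref{prop6} without detailing the argument for an \emph{arbitrary} $u\in S_\lambda$). The chain Moser iteration $\Rightarrow$ $u\in L^\infty$, then a weak comparison forcing $u\geq\tilde u_\mu$, then $u^{-\gamma}\leq\tilde u_\mu^{-\gamma}\in L^r(\Omega)$ with $r>N$ so that Lieberman's theorem applies, then the normal-derivative comparison against $\tilde u_\mu\in{\rm int}\,C_+$, is the same mechanism the paper uses to place the constructed minimizer $\hat u_\lambda$ in ${\rm int}\,C_+$, and you correctly isolate the one adjustment needed for a general solution: replace $\tilde u_\lambda$ by $\tilde u_\mu$ with $\mu\leq\min\{\lambda,\lambda_0\}$ chosen via Proposition~\ref{prop5} so that $\|\tilde u_\mu\|_\infty\leq\delta_0$, which is what allows $H(f)(iv)$ to control the sign of $f(z,u)$ on $\{\tilde u_\mu>u\}$. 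The sign computation $\mu\tilde u_\mu^{-\gamma}\leq\lambda\tilde u_\mu^{-\gamma}\leq\lambda u^{-\gamma}$ together with $f(z,u)\geq 0$ there is correct, and strict monotonicity of $A$ (Proposition~\ref{prop3}) then closes the comparison. One could instead invoke the nonlinear strong maximum principle with $\hat\xi_\rho$ from $H(f)(v)$ to get interior positivity and the Hopf property, but one would still need a lower bound for $u$ near $\partial\Omega$ to apply Lieberman in the first place, so your comparison with $\tilde u_\mu$ is the cleanest way to break that circularity.
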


The next proposition shows that $\mathcal{L}$ is an interval.
\begin{prop}\label{prop8}
	If hypotheses $H(f)$ hold, $\lambda\in\mathcal{L}$ and $\vartheta\in(0,\lambda)$, then $\vartheta\in\mathcal{L}.$
\end{prop}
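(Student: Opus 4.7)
My plan is to produce a positive solution of $(P_\vartheta)$ by a direct minimization of a truncated energy, sandwiching the candidate between a purely singular lower barrier and the given $u_\lambda\in S_\lambda\subseteq{\rm int}\,C_+$ (Corollary~\ref{cor7}) from above. The subtlety is that $\tilde{u}_\vartheta$ itself is not directly usable as a subsolution unless $\vartheta$ is small, so I will instead introduce an auxiliary parameter $\vartheta'<\vartheta$ and use $\tilde{u}_{\vartheta'}$ as the lower barrier.

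Concretely, first I will fix $u_\lambda\in S_\lambda$ and, using Proposition~\ref{prop5} which gives $\tilde{u}_{\vartheta'}\to 0$ in $C^1_0(\overline{\Omega})$ as $\vartheta'\to 0^+$, choose $\vartheta'\in(0,\min\{\vartheta,\lambda_0\})$ small enough to guarantee simultaneously $\tilde{u}_{\vartheta'}\le\delta_0$ on $\Omega$ and $\tilde{u}_{\vartheta'}\le u_\lambda$ pointwise; the latter follows because $u_\lambda\in{\rm int}\,C_+$ yields $u_\lambda\ge c\,d(\cdot,\partial\Omega)$ for some $c>0$, while $\tilde{u}_{\vartheta'}(z)\le\|D\tilde{u}_{\vartheta'}\|_\infty\,d(z,\partial\Omega)$ with the right-hand side vanishing in the $C^1_0$-limit. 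The bound $\tilde{u}_{\vartheta'}\le\delta_0$ triggers $H(f)(iv)$, supplying $f(z,\tilde{u}_{\vartheta'}(z))\ge 0$ a.e.

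Next, I will truncate the reaction at level $\vartheta$ so that it equals $\vartheta\,\tilde{u}_{\vartheta'}^{-\gamma}+f(z,\tilde{u}_{\vartheta'})$ below $\tilde{u}_{\vartheta'}$, equals $\vartheta\,x^{-\gamma}+f(z,x)$ on $[\tilde{u}_{\vartheta'},u_\lambda]$, and equals $\vartheta\,u_\lambda^{-\gamma}+f(z,u_\lambda)$ above $u_\lambda$, and form the associated $C^1$, coercive, sequentially weakly lower semicontinuous energy $\sigma_\vartheta$ exactly as in Proposition~\ref{prop6}. Weierstrass--Tonelli yields a global minimizer $u_\vartheta$; the decisive step is then to verify the sandwich $\tilde{u}_{\vartheta'}\le u_\vartheta\le u_\lambda$ by testing $\sigma_\vartheta'(u_\vartheta)=0$ against $(\tilde{u}_{\vartheta'}-u_\vartheta)^+$ and $(u_\vartheta-u_\lambda)^+$ respectively. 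For the lower half, on $\{\tilde{u}_{\vartheta'}>u_\vartheta\}$ the truncated reaction is $\ge\vartheta'\,\tilde{u}_{\vartheta'}^{-\gamma}$ (since $\vartheta'<\vartheta$ and $f(z,\tilde{u}_{\vartheta'})\ge 0$), which matches the equation solved by $\tilde{u}_{\vartheta'}$, so strict monotonicity of $A$ (Proposition~\ref{prop3}) closes the argument; for the upper half, on $\{u_\vartheta>u_\lambda\}$ the truncated reaction is $\le\lambda\,u_\lambda^{-\gamma}+f(z,u_\lambda)=-\Delta_p u_\lambda$ (since $\vartheta<\lambda$), and the same monotonicity argument applies. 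Once the sandwich is in place, $u_\vartheta$ is a genuine positive weak solution of $(P_\vartheta)$, and the same Lieberman regularity cascade used in Proposition~\ref{prop5} (using $u_\vartheta^{-\gamma}\le\tilde{u}_{\vartheta'}^{-\gamma}\in L^r(\Omega)$ for some $r>N$) puts $u_\vartheta\in{\rm int}\,C_+$, so $\vartheta\in\mathcal L$.

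The main obstacle, and the sole reason for introducing $\vartheta'$ instead of working with $\tilde{u}_\vartheta$ directly, is precisely this lower comparison: without the sign $f(z,\tilde{u}_{\vartheta'})\ge 0$ one cannot degrade the reaction from $\vartheta$ down to $\vartheta'$, so the construction hinges on the simultaneous smallness $\tilde{u}_{\vartheta'}\le\delta_0$ and $\tilde{u}_{\vartheta'}\le u_\lambda$ granted by Proposition~\ref{prop5}. The upper comparison, by contrast, is automatic once $u_\lambda$ is fixed, and every remaining step (coercivity, weak lower semicontinuity, regularity) mirrors Proposition~\ref{prop6}.
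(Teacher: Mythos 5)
Your proposal is correct and follows essentially the same route as the paper: the paper likewise introduces an auxiliary parameter $\tau\in(0,\lambda_0]$ with $\tau<\vartheta$ and $\tilde u_\tau\leq u_\lambda$ (your $\vartheta'$), truncates the reaction between $\tilde u_\tau$ and $u_\lambda$, minimizes the resulting coercive functional, and carries out the same two test-function comparisons to locate the minimizer in $[\tilde u_\tau,u_\lambda]$. The only cosmetic difference is that you spell out the distance-function argument for $\tilde u_{\vartheta'}\leq u_\lambda$, whereas the paper simply invokes Proposition~\ref{prop5} and leaves it implicit.
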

\begin{proof}
	Since $\lambda\in\mathcal{L}$, we can find $u_{\lambda}\in S_{\lambda}\subseteq {\rm int}\, C_+$. Proposition \ref{prop5} implies that we can find $\tau\in[0,\lambda_0]$ (see (\ref{eq11})) such that
	$$\tau<\vartheta\ \mbox{and}\ \tilde{u}_{\tau}\leq u_{\lambda}.$$
	
	We introduce the Carath\'eodory function $e(z,x)$ defined by
	\begin{equation}\label{eq16}
		e_{\vartheta}(z,x)=\left\{\begin{array}{ll}
			\vartheta\tilde{u}_{\tau}(z)^{-\gamma}+f(z,\tilde{u}_{\tau}(z))&\mbox{if}\ x<\tilde{u}_{\tau}(z)\\
			\vartheta x^{-\gamma}+f(z,x)&\mbox{if}\ \tilde{u}_{\tau}(z)\leq x\leq u_{\lambda}(z)\\
			\vartheta u_{\lambda}(z)^{-\gamma}+f(z,u_{\lambda}(z))&\mbox{if}\ u_{\lambda}(z)<x.
		\end{array}\right.
	\end{equation}
	
	We set $E_{\vartheta}(z,x)=\int^x_0 e_{\vartheta}(z,s)ds$ and consider the functional $\hat{\psi}_{\vartheta}:W^{1,p}_0(\Omega)\rightarrow\RR$ defined by
	$$\hat{\psi}_{\vartheta}(u)=\frac{1}{p}||Du||^p_p-\int_{\Omega}E_{\vartheta}(z,u)dz\ \mbox{for all}\ u\in W^{1,p}_0(\Omega).$$
	
	We know that $\hat{\psi}_{\vartheta}\in C^1(W^{1,p}_0(\Omega))$. Moreover, $\hat{\psi}_{\vartheta}$ is coercive (see (\ref{eq16})) and sequentially weakly lower semicontinuous. So, we can find $u_{\vartheta}\in W^{1,p}_0(\Omega)$ such that
	\begin{eqnarray}\label{eq17}
		&&\hat{\psi}_{\vartheta}(u_{\vartheta})=\inf\{\hat{\psi}_{\vartheta}(u):u\in W^{1,p}_0(\Omega)\},\nonumber\\
		&\Rightarrow&\hat{\psi}'_{\vartheta}(u_{\vartheta})=0,\nonumber\\
		&\Rightarrow&\left\langle A(u_{\vartheta}),h\right\rangle=\int_{\Omega}e_{\vartheta}(z,u_{\vartheta})hdz\ \mbox{for all}\ h\in W^{1,p}_0(\Omega).
	\end{eqnarray}
	
	In (\ref{eq17})  we first choose $h=(\tilde{u}_{\tau}-u_{\vartheta})^+\in W^{1,p}_0(\Omega)$. Then
	\begin{eqnarray*}
		\left\langle A(u_{\vartheta}),(\tilde{u}_{\tau}-u_{\vartheta})^+\right\rangle&=&\int_{\Omega}[\vartheta\tilde{u}_{\tau}^{-\gamma}+f(z,\tilde{u}_{\tau})](\tilde{u}_{\tau}-u_{\vartheta})^+dz\ (\mbox{see (\ref{eq16})})\\
		&\geq&\int_{\Omega}\vartheta\tilde{u}_{\tau}^{-\gamma}(\tilde{u}_{\tau}-u_{\vartheta})^+dz\\
		&&(\mbox{since}\ \tau\leq\lambda_0,\ \mbox{see (\ref{eq11}) and hypothesis}\ H(f)(iv))\\
		&\geq&\int_{\Omega}\tau\tilde{u}_{\tau}^{-\gamma}(\tilde{u}_{\tau}-u_{\vartheta})^+dz\ (\mbox{recall that}\ \tau<\vartheta)\\
		&=&\left\langle A(u_{\tau}),(\tilde{u}_{\tau}-u_{\vartheta})^+\right\rangle\ (\mbox{see Proposition \ref{eq5}}),\\
		\Rightarrow\tilde{u}_{\tau}\leq u_{\vartheta}.&&
	\end{eqnarray*}
	
	Next, in (\ref{eq17}) we choose $h=(u_{\vartheta}-u_{\lambda})^+\in W^{1,p}_0(\Omega)$. Then
	\begin{eqnarray*}
		\left\langle A(u_{\vartheta}),(u_{\vartheta}-u_{\lambda})^+\right\rangle&=&\int_{\Omega}[\vartheta u_{\lambda}^{-\gamma}+f(z,u_{\lambda})](u_{\vartheta}-u_{\lambda})^+dz\ (\mbox{see (\ref{eq16})})\\
		&\leq&\int_{\Omega}[\lambda u_{\lambda}^{-\gamma}+f(z,u_{\lambda})](u_{\vartheta}-u_{\lambda})^+dz\ (\mbox{since}\ \vartheta<\lambda)\\
		&=&\left\langle A(u_{\lambda}),(u_{\vartheta}-u_{\lambda})^+\right\rangle\ (\mbox{since}\ u_{\lambda}\in S_{\lambda}),\\
		\Rightarrow u_{\vartheta}\leq u_{\lambda}.&&
	\end{eqnarray*}
	
	So, we have proved that
	\begin{equation}\label{eq18}
		u_{\vartheta}\in[\tilde{u}_{\tau},u_{\lambda}].
	\end{equation}
	
	It follows from (\ref{eq16}), (\ref{eq17}) and (\ref{eq18}) that
	$$\vartheta\in\mathcal{L}\ \mbox{and}\ u_{\vartheta}\in S_{\vartheta}\subseteq {\rm int}\, C_+.$$
The proof is now complete.
\end{proof}

An interesting byproduct of the above proof is the following result.
\begin{corollary}\label{cor9}
	If hypotheses $H(f)$ hold, $\lambda\in\mathcal{L},u_{\lambda}\in S_{\lambda}\subseteq {\rm int}\, C_+$, and $\vartheta<\lambda$, then $\vartheta\in\mathcal{L}$ and we can find $u_{\vartheta}\in S_{\vartheta}\subseteq {\rm int}\, C_+$ such that $u_{\vartheta}\leq u_{\lambda}$.
\end{corollary}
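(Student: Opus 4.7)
The key observation is that this corollary is essentially a direct extraction from the proof of Proposition \ref{prop8}, which already constructs a solution $u_\vartheta$ sandwiched between $\tilde{u}_\tau$ and $u_\lambda$. The plan is therefore to replay that construction verbatim and simply record the upper bound $u_\vartheta \le u_\lambda$ as part of the conclusion.

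Concretely, starting from a given $u_\lambda \in S_\lambda \subseteq {\rm int}\, C_+$ and $\vartheta < \lambda$, I would first invoke Proposition \ref{prop5} to pick $\tau \in (0,\lambda_0]$ with $\tau < \vartheta$ such that the purely singular solution $\tilde{u}_\tau$ satisfies $\tilde{u}_\tau \le u_\lambda$ (using $\|\tilde{u}_\tau\|_{C^1_0(\overline{\Omega})} \to 0$ as $\tau \to 0^+$ together with the fact that $u_\lambda \in {\rm int}\, C_+$). Then I would introduce exactly the truncation $e_\vartheta(z,x)$ from \eqref{eq16}, using $\tilde{u}_\tau$ as the lower barrier and $u_\lambda$ as the upper barrier, and consider the associated $C^1$ energy functional $\hat{\psi}_\vartheta$ on $W^{1,p}_0(\Omega)$.

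The functional $\hat{\psi}_\vartheta$ is coercive and sequentially weakly lower semicontinuous, so the Weierstrass-Tonelli theorem yields a global minimizer $u_\vartheta$ which is a critical point. Testing $\hat{\psi}'_\vartheta(u_\vartheta) = 0$ against $(\tilde{u}_\tau - u_\vartheta)^+$ gives $\tilde{u}_\tau \le u_\vartheta$, using hypothesis $H(f)(iv)$ and $\tau < \vartheta$; testing against $(u_\vartheta - u_\lambda)^+$ gives $u_\vartheta \le u_\lambda$, using $\vartheta < \lambda$ and the fact that $u_\lambda$ itself solves $(P_\lambda)$. On the resulting order interval $[\tilde{u}_\tau, u_\lambda]$ the truncation $e_\vartheta$ coincides with the true reaction $\vartheta x^{-\gamma} + f(z,x)$, so $u_\vartheta$ solves $(P_\vartheta)$. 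Invoking Corollary \ref{cor7} places $u_\vartheta \in {\rm int}\, C_+$, which completes the argument.

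There is essentially no new obstacle here beyond what was already handled in Proposition \ref{prop8}: the whole point of that proof was to build the subsolution from $\tilde{u}_\tau$ and use $u_\lambda$ as a natural supersolution, and the squeeze $\tilde{u}_\tau \le u_\vartheta \le u_\lambda$ is produced as part of that argument. The only thing worth flagging is the need to state explicitly that $\tilde{u}_\tau \le u_\lambda$ can be arranged, which follows from Proposition \ref{prop5} together with the fact that $u_\lambda \in {\rm int}\, C_+$ dominates every sufficiently small element of ${\rm int}\, C_+$ pointwise.
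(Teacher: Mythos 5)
Your proposal is correct and is precisely how the paper obtains Corollary~\ref{cor9}: the paper presents it as an immediate byproduct of the proof of Proposition~\ref{prop8}, whose truncation $e_\vartheta$ in \eqref{eq16} (with lower barrier $\tilde u_\tau$ and upper barrier $u_\lambda$) and the two test-function computations already produce $u_\vartheta\in[\tilde u_\tau,u_\lambda]$, hence $u_\vartheta\le u_\lambda$. Nothing new is needed beyond recording that squeeze, which is exactly what you do.
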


In fact, we can improve the above result as follows.
\begin{prop}\label{prop10}
	If hypotheses $H(f)$ hold, $\lambda\in\mathcal{L},u_{\lambda}\in S_{\lambda}\subseteq {\rm int}\, C_+$, and $\vartheta<\lambda$, then $\vartheta\in\mathcal{L}$ and we can find $u_{\vartheta}\in S_{\vartheta}\subseteq {\rm int}\, C_+$ such that $u_{\lambda}-u_{\vartheta}\in {\rm int}\, C_+$.
\end{prop}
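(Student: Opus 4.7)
\medskip

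\noindent\textbf{Plan of Proof.} By Corollary \ref{cor9}, I already have $u_\vartheta\in S_\vartheta\subseteq {\rm int}\,C_+$ with $u_\vartheta\leq u_\lambda$. The task is therefore only to upgrade this weak ordering to $u_\lambda-u_\vartheta\in {\rm int}\,C_+$, and the natural instrument for that is the strong comparison principle in Proposition \ref{prop2}. The strategy is to rewrite the two equations satisfied by $u_\vartheta$ and $u_\lambda$ in the normalized form required by Proposition \ref{prop2}, with a \emph{common} singular coefficient, and then exhibit the required strict ordering $h_1\prec h_2$ of the right-hand sides.

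Set $\rho=\|u_\lambda\|_\infty$ (finite since $u_\lambda\in {\rm int}\,C_+\subseteq L^\infty(\Omega)$) and let $\hat\xi=\hat\xi_\rho>0$ be supplied by hypothesis $H(f)(v)$. Adding $\hat\xi u^{p-1}$ to both equations and subtracting $\vartheta u^{-\gamma}$, I obtain
\begin{align*}
-\Delta_p u_\vartheta+\hat\xi u_\vartheta^{p-1}-\vartheta u_\vartheta^{-\gamma}&=f(z,u_\vartheta)+\hat\xi u_\vartheta^{p-1}=:h_1(z),\\
-\Delta_p u_\lambda+\hat\xi u_\lambda^{p-1}-\vartheta u_\lambda^{-\gamma}&=(\lambda-\vartheta)u_\lambda^{-\gamma}+f(z,u_\lambda)+\hat\xi u_\lambda^{p-1}=:h_2(z),
\end{align*}
for a.a.\ $z\in\Omega$. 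Both $h_1,h_2\in L^\infty(\Omega)$ because $u_\vartheta,u_\lambda\in {\rm int}\,C_+$ stay bounded away from $0$ on compacta, and $f(z,\cdot)$ is bounded on bounded sets by $H(f)(i)$.

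Next I verify $h_1\prec h_2$. Since $0\leq u_\vartheta(z)\leq u_\lambda(z)\leq\rho$ for a.a.\ $z\in\Omega$, the monotonicity in $H(f)(v)$ yields
$$
f(z,u_\lambda)+\hat\xi u_\lambda^{p-1}\geq f(z,u_\vartheta)+\hat\xi u_\vartheta^{p-1}\quad\text{for a.a. } z\in\Omega,
$$
so
$$
h_2(z)-h_1(z)\geq(\lambda-\vartheta)u_\lambda(z)^{-\gamma}\geq(\lambda-\vartheta)\|u_\lambda\|_\infty^{-\gamma}>0\quad\text{for a.a. } z\in\Omega.
$$
This uniform positive lower bound trivially furnishes, for every compact $K\subseteq\Omega$, a constant $c_K>0$ with $h_2(z)-h_1(z)\geq c_K$ a.e. on $K$; thus $h_1\prec h_2$.

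Since $u_\vartheta,u_\lambda\in {\rm int}\,C_+$, an application of Proposition \ref{prop2} (with the common parameter $\vartheta$ in the singular term and constant $\hat\xi$) gives $u_\lambda-u_\vartheta\in {\rm int}\,C_+$, which is the desired conclusion. The only nontrivial ingredient is pinning down the right normalization so that Proposition \ref{prop2} applies with identical $\hat\xi$ and singular coefficient on both sides; once this is done, the monotonicity hypothesis $H(f)(v)$ and the strict inequality $\vartheta<\lambda$ combined with $u_\lambda\in L^\infty$ immediately produce the quantitative gap needed for $h_1\prec h_2$.
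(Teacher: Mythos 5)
Your proof is correct and follows essentially the same route as the paper's: use Corollary~\ref{cor9} for $u_\vartheta\leq u_\lambda$, normalize the two equations so Proposition~\ref{prop2} applies with a common $\hat\xi_\rho$ and a common singular coefficient, then use $H(f)(v)$ plus $\vartheta<\lambda$ and $u_\lambda\in L^\infty$ to get the uniform gap $h_1\prec h_2$. The only (cosmetic) difference is the normalization: you take $\vartheta$ as the singular coefficient so the term $(\lambda-\vartheta)u_\lambda^{-\gamma}$ lands in $h_2$, whereas the paper takes $\lambda$, putting $-(\lambda-\vartheta)u_\vartheta^{-\gamma}$ into $h_1$; both yield the same lower bound of order $(\lambda-\vartheta)\rho^{-\gamma}$.
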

\begin{proof}
	From Corollary \ref{cor9} we know that $\vartheta\in\mathcal{L}$ and we can find $u_{\vartheta}\in S_{\vartheta}\subseteq {\rm int}\, C_+$ such that
	\begin{equation}\label{eq19}
		u_{\vartheta}\leq u_{\lambda}.
	\end{equation}
	
	Let $\rho=||u_{\lambda}||_{\infty}$ and let $\hat{\xi}_{\rho}>0$ be as postulated by hypothesis $H(f)(v)$. Then
	\begin{eqnarray*}
		&&-\Delta_pu_{\vartheta}+\hat{\xi}_pu_{\vartheta}^{p-1}-\lambda u_{\vartheta}^{-\gamma}\\
		&=&-(\lambda-\vartheta)u_{\vartheta}^{-\gamma}+f(z,u_{\vartheta})+\hat{\xi}_{\rho}u_{\vartheta}^{p-1}\\
		&\leq&f(z,u_{\lambda})+\hat{\xi}_{\rho}u_{\lambda}^{p-1}\ (\mbox{recall that}\ \vartheta<\lambda\ \mbox{and see (\ref{eq19}) and hypothesis}\ H(f)(v))\\
		&=&-\Delta_pu_{\lambda}+\hat{\xi}_{\rho}u_{\lambda}^{p-1}-\lambda u_{\lambda}^{-\gamma}\ (\mbox{since}\ u_{\lambda}\in S_{\lambda}).
	\end{eqnarray*}
	
	We set
	\begin{eqnarray*}
		&&h_1(z)=f(z,u_{\vartheta}(z))+\hat{\xi}_{\rho}u_{\vartheta}(z)^{p-1}-(\lambda-\vartheta)u_{\vartheta}(z)^{-\gamma}\\
		&&h_2(z)=f(z,u_{\lambda}(z))+\hat{\xi}_{\rho}u_{\lambda}(z)^{p-1}.
	\end{eqnarray*}
	
	We have
	$$h_2(z)-h_1(z)\geq(\lambda-\vartheta)u_{\vartheta}(z)^{-\gamma}\geq(\lambda-\vartheta)\rho^{-\gamma}\ \mbox{for almost all}\ z\in\Omega$$
	(see (\ref{eq19}) and hypotheses $H(f)(v)$).
	
	We can apply Proposition \ref{prop2} and conclude that
	$$u_{\lambda}-u_{\vartheta}\in {\rm int}\, C_+.$$
The proof is now complete.
\end{proof}

Denote $\lambda^*=\sup\mathcal{L}.$
\begin{prop}\label{prop11}
	If hypotheses $h(f)$ hold, then $\lambda^*<+\infty$.
\end{prop}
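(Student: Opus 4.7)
The plan is to argue by contradiction, exploiting the uniform nonresonance condition $\eta>\hat{\lambda}_1$ of hypothesis $H(f)(ii)$. Suppose $\lambda^*=+\infty$; then there is a sequence $\lambda_n\to+\infty$ in $\mathcal{L}$ with corresponding positive solutions $u_n\in S_{\lambda_n}\subseteq{\rm int}\,C_+$ (Corollary \ref{cor7}). Combining $H(f)(i)$ with $H(f)(ii)$, I would first fix $\epsilon\in(0,\eta-\hat{\lambda}_1)$ and extract a global lower bound
$$f(z,x)\geq(\eta-\epsilon)x^{p-1}-c_\epsilon\ \mbox{for a.a.\ }z\in\Omega\ \mbox{and all}\ x\geq 0,$$
where $c_\epsilon>0$ absorbs the bounded-range error coming from $H(f)(i)$.

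The key step would be to apply the Allegretto--Huang Picone inequality for $-\Delta_p$ to the pair $(\hat{u}_1,u_n)$. Since $u_n$ and $\hat{u}_1$ both belong to ${\rm int}\,C_+$, a Hopf-type bound (such as Proposition~2.1 of Marano--Papageorgiou \cite{14}) ensures $\hat{u}_1/u_n\in L^\infty(\Omega)$, so that $\hat{u}_1^p/u_n^{p-1}\in W^{1,p}_0(\Omega)$ is admissible as a test function in the weak form of \eqref{eqp}. Picone then yields
$$\hat{\lambda}_1=\int_\Omega|D\hat{u}_1|^pdz\geq\int_\Omega\bigl[\lambda_nu_n^{-\gamma}+f(z,u_n)\bigr]\frac{\hat{u}_1^p}{u_n^{p-1}}dz.$$
Inserting the lower bound on $f$ and using $||\hat{u}_1||_p=1$ reduces this to
$$\hat{\lambda}_1-(\eta-\epsilon)\geq\lambda_n I_n-c_\epsilon J_n,$$
where $I_n:=\int_\Omega\hat{u}_1^p u_n^{-(p-1+\gamma)}dz$ and $J_n:=\int_\Omega\hat{u}_1^p u_n^{-(p-1)}dz$. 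Via the factorization $\hat{u}_1^p u_n^{-(p-1)}=(\hat{u}_1^p u_n^{-(p-1+\gamma)})^\alpha \hat{u}_1^{p(1-\alpha)}$ with $\alpha:=(p-1)/(p-1+\gamma)\in(0,1)$, H\"older's inequality delivers $J_n\leq I_n^\alpha$.

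Setting $k:=(\eta-\epsilon)-\hat{\lambda}_1>0$, the chain collapses to the single scalar inequality $c_\epsilon I_n^\alpha-\lambda_n I_n\geq k>0$. An elementary calculation shows that the supremum of $I\mapsto c_\epsilon I^\alpha-\lambda_n I$ over $[0,+\infty)$ equals $(1-\alpha)c_\epsilon^{1/(1-\alpha)}\alpha^{\alpha/(1-\alpha)}\lambda_n^{-\alpha/(1-\alpha)}$, which tends to $0$ as $\lambda_n\to+\infty$ since $\alpha\in(0,1)$. For $n$ large enough this supremum becomes strictly less than $k$, contradicting the displayed lower bound; hence $\lambda^*<+\infty$.

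The main technical obstacle will be to justify the Picone step rigorously in the singular setting: one needs both that $\hat{u}_1^p/u_n^{p-1}$ is admissible as a $W^{1,p}_0$ test function and that the singular integral $\int_\Omega\hat{u}_1^p u_n^{-(p-1+\gamma)}dz$ is finite. Both follow from $u_n\in{\rm int}\,C_+$ (which provides the needed upper bound on $\hat{u}_1/u_n$ via a Hopf-type argument) together with the Lazer--McKenna integrability lemma already invoked in the proof of Proposition \ref{prop5}.
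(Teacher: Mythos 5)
Your argument is correct but follows a genuinely different route from the paper's. The paper's proof is a truncation--variational argument: it first shows (from $H(f)(i)$ and $H(f)(ii)$) that for $\tilde{\lambda}$ large one has $\tilde{\lambda}x^{-\gamma}+f(z,x)\geq[\hat{\lambda}_1+\epsilon]x^{p-1}$ for all $x\geq 0$, then for $\lambda>\tilde{\lambda}$ and $u_\lambda\in S_\lambda$ truncates the nonlinearity $x\mapsto[\hat{\lambda}_1+\epsilon]x^{p-1}$ to the order interval $[t\hat{u}_1,u_\lambda]$, minimizes the corresponding $C^1$ functional, and obtains a positive solution $\bar{u}\in[t\hat{u}_1,u_\lambda]$ of $-\Delta_p\bar{u}=[\hat{\lambda}_1+\epsilon]\bar{u}^{p-1}$; since $\hat{\lambda}_1+\epsilon>\hat{\lambda}_1$ this forces $\bar{u}$ to be nodal, contradicting $\bar{u}\geq t\hat{u}_1>0$. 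You replace all of this with a direct application of the Allegretto--Huang Picone inequality, which after inserting the global lower bound $f(z,x)\geq(\eta-\epsilon)x^{p-1}-c_\epsilon$ and the interpolation $J_n\leq I_n^\alpha$ reduces the matter to a one-variable calculus fact. Both approaches are sound. The paper's route stays entirely within the truncation--minimization toolbox used elsewhere in the paper and avoids any discussion of admissibility of $\hat{u}_1^p/u_n^{p-1}$; yours is shorter and quantitative, yielding an explicit numerical upper bound on $\lambda^*$. Your identification of the technical obstruction is accurate and your resolution is adequate: since $u_n\in{\rm int}\,C_+$, Proposition~2.1 of Marano--Papageorgiou gives $\hat{u}_1\leq c\,u_n$, from which $\hat{u}_1^p/u_n^{p-1}\in W^{1,\infty}(\Omega)\cap C_0(\overline{\Omega})\subseteq W^{1,p}_0(\Omega)$ and $I_n\leq c^{p-1+\gamma}\int_\Omega\hat{u}_1^{1-\gamma}dz<\infty$ (recall $0<\gamma<1$), so the Picone step is legitimate. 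One small presentational remark: in the final scalar inequality you need $I_n>0$ for the optimization to be non-vacuous, but this is automatic because $\hat{u}_1>0$ and $u_n\in L^\infty$, so no gap there.
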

\begin{proof}
	Let $\epsilon>0$ be such that $\hat{\lambda}_1+\epsilon<\eta$ (see hypothesis $H(f)(ii)$). We can find $M>0$ such that
	\begin{equation}\label{eq20}
		f(z,x)\geq[\hat{\lambda}_1+\epsilon]x^{p-1}\ \mbox{for almost all}\ z\in\Omega,\ \mbox{and all}\ x\geq M.
	\end{equation}
	
	Also, hypothesis $H(f)(i)$ implies that we can find large enough $\tilde{\lambda}>0$ such that
	\begin{equation}\label{eq21}
		\tilde{\lambda}M^{-\gamma}+f(z,x)\geq[\hat{\lambda}_1+\epsilon]M^{p-1}\ \mbox{for almost all}\ z\in\Omega,\ \mbox{and all}\ 0\leq x\leq M.
	\end{equation}
	
	It follows from (\ref{eq20}) and (\ref{eq21}) that
	\begin{equation}\label{eq22}
		\tilde{\lambda}x^{-\gamma}+f(z,x)\geq[\hat{\lambda}_1+\epsilon]x^{p-1}\ \mbox{for almost all}\ z\in\Omega,\ \mbox{and all}\ x\geq 0.
	\end{equation}
	
	Let $\lambda>\tilde{\lambda}$ and suppose that $\lambda\in\mathcal{L}$. Then we can find $u_{\lambda}\in S_{\lambda}\subseteq {\rm int}\, C_+$. We have
	\begin{equation}\label{eq23}
		-\Delta_pu_{\lambda}=\lambda u_{\lambda}^{-\gamma}+f(z,u_{\lambda})>\tilde{\lambda}u_{\lambda}^{-\gamma}+f(z,u_{\lambda})\geq[\hat{\lambda}_1+\epsilon]u_{\lambda}^{p-1}\ \mbox{for a.a.}\ z\in\Omega\ (\mbox{see (\ref{eq22})}).
	\end{equation}
	
	Since $u_{\lambda}\in {\rm int}\, C_+$, we can find $t\in(0,1)$ so small that
	\begin{equation}\label{eq24}
		\hat{y}_1=t\hat{u}_1\leq u_{\lambda}
	\end{equation}
	(see Proposition 2.1 of Marano \& Papageorgiou \cite{14}). We have
	\begin{equation}\label{eq25}
		-\Delta_p\hat{y}_1=\hat{\lambda}_1\hat{y}_1^{p-1}<[\hat{\lambda}_1+\epsilon]\hat{y}_1^{p-1}\ \mbox{for almost all}\ z\in\Omega.
	\end{equation}
	
	Using (\ref{eq24}), we can define the Carath\'eodory function $\beta(z,x)$ as follows
	\begin{eqnarray}\label{eq26}
		\beta(z,x)=\left\{
		  \begin{array}{lll}
			 & [\hat{\lambda}_1 + \epsilon]\hat{y}_1(z)^{p-1}&  \mbox{if}\ x<\hat{y}_1(z)\\
			 & [\hat{\lambda}_1 + \epsilon]x^{p-1}           &  \mbox{if}\ \hat{y}_1(z)\leq x\leq u_{\lambda}(z)\\
			 & [\hat{\lambda}_1 + \epsilon]u_{\lambda}(z)^{p-1}&  \mbox{if}\ u_{\lambda}(z)<x.
		  \end{array}\right.
	\end{eqnarray}
	
	We set $B(z,x)=\int^x_0\beta(z,s)ds$ and consider the $C^1$-functional $\sigma:W^{1,p}_0(\Omega)\rightarrow\RR$ defined by
	$$\sigma(u)=\frac{1}{p}||Du||^p_p-\int_{\Omega}B(z,u)dz\ \mbox{for all}\ u\in W^{1,p}_0(\Omega).$$
	
	From (\ref{eq26}) it is clear that $\sigma(\cdot)$ is coercive. Also, it is sequentially weakly lower semicontinuous. So, we can find $\bar{u}\in W^{1,p}_0(\Omega)$ such that
	\begin{eqnarray}\label{eq27}
		&&\sigma(\bar{u})=\inf\{\sigma(u):u\in W^{1,p}_0(\Omega)\},\nonumber\\
		&\Rightarrow&\sigma'(\bar{u})=0,\nonumber\\
		&\Rightarrow&\left\langle A(\bar{u}),h\right\rangle=\int_{\Omega}\beta(z,\bar{u})hdz\ \mbox{for all}\ h\in W^{1,p}_0(\Omega).
	\end{eqnarray}
	
	In (\ref{eq27}) we first choose $h=(\hat{y}_1-\bar{u})^+\in W^{1,p}_0(\Omega)$. Then
	\begin{eqnarray*}
		\left\langle A(\bar{u}),(\hat{y}_1-\bar{u})^+\right\rangle &=&\int_{\Omega}[\hat{\lambda}_1+\epsilon]\hat{y}_1^{p-1}(\hat{y}_1-\bar{u})^+dz\ (\mbox{see (\ref{eq26})})\\
		&\geq&\left\langle A(\hat{y}_1),(\hat{y}_1-\hat{u})^+\right\rangle\ (\mbox{see (\ref{eq25})}),\\
		\Rightarrow\hat{y}_1\leq\bar{u}.&
	\end{eqnarray*}
	
	Also, in (\ref{eq27}) we choose $h=(\bar{u}-u_{\lambda})^+\in W^{1,p}_0(\Omega)$. Then
	\begin{eqnarray*}
		\left\langle A(\bar{u}),(\bar{u}-u_{\lambda})^+\right\rangle&=&\int_{\Omega}[\hat{\lambda}_1+\epsilon]u_{\lambda}^{p-1}(\bar{u}-u_{\lambda})^+dz\ (\mbox{see (\ref{eq26})})\\
		&\leq&\left\langle A(u_{\lambda}),(\bar{u}-u_{\lambda})^+\right\rangle\ (\mbox{see (\ref{eq23})}),\\
		\Rightarrow\bar{u}\leq u_{\lambda}.&&
	\end{eqnarray*}
	
	So, we have proved that
	\begin{equation}\label{eq28}
		\bar{u}\in[\hat{y}_1,u_{\lambda}].
	\end{equation}
	
It follows	from (\ref{eq26}), (\ref{eq27}) and (\ref{eq28}) that
	\begin{eqnarray*}
		&&-\Delta_p\bar{u}(z)=[\hat{\lambda}_1+\epsilon]\bar{u}(z)^{p-1}\ \mbox{for almost all}\ z\in\Omega,\ \bar{u}|_{\partial\Omega}=0,\\
		&\Rightarrow&\bar{u}\in C^1_0(\overline{\Omega})\ \mbox{must be nodal, a contradiction (see (\ref{eq28}))}.
	\end{eqnarray*}
	
	Therefore we have $\lambda^*\leq\tilde{\lambda}<+\infty$.
\end{proof}

Next, we show that the critical parameter $\lambda^*>0$ is admissible.
\begin{prop}\label{prop12}
	If hypotheses $H(f)$ hold, then $\lambda^*\in\mathcal{L}$.
\end{prop}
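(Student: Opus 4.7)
My plan is to take a sequence $\lambda_n\uparrow\lambda^*$ with $\lambda_n\in\mathcal{L}$ and produce a positive solution $u^*$ at $\lambda=\lambda^*$ as the strong $W^{1,p}_0(\Omega)$-limit of suitably chosen $u_n\in S_{\lambda_n}$. To secure a \emph{uniform} positive lower bound on the sequence, I first fix $\tau_0\in(0,\min\{\lambda_0,\lambda_1\})$, with $\lambda_0$ as in the proof of Proposition \ref{prop6}, and for each $n$ pick some $\mu_n\in\mathcal{L}$ with $\lambda_n<\mu_n<\lambda^*$ together with a solution $v_n\in S_{\mu_n}$. The variational construction of Proposition \ref{prop8} applied with parameters $(\tau,\vartheta,\lambda)=(\tau_0,\lambda_n,\mu_n)$ and upper anchor $v_n$ then yields $u_n\in S_{\lambda_n}$ satisfying $\tilde{u}_{\tau_0}\leq u_n$. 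Combined with the Lazer--McKenna-type integrability $\tilde{u}_{\tau_0}^{-\gamma}\in L^{p'}(\Omega)$, this uniform lower bound is what will dominate the singular term in the limit.

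The main task is to bound $\{u_n\}$ in $W^{1,p}_0(\Omega)$. Arguing by contradiction, suppose $||u_n||\to+\infty$ and set $y_n=u_n/||u_n||$, so that $||y_n||=1$ and $y_n\geq 0$. Dividing the equation for $u_n$ by $||u_n||^{p-1}$, the singular term $\lambda_n u_n^{-\gamma}/||u_n||^{p-1}$ vanishes in $L^{p'}(\Omega)$ (using the uniform lower bound and $\gamma<1$), while the $(p-1)$-linear growth coming from $H(f)(i)$--$(ii)$ keeps $f(z,u_n)/||u_n||^{p-1}$ bounded in $L^{p'}(\Omega)$. Passing to a subsequence, $y_n\stackrel{w}{\rightarrow} y$ in $W^{1,p}_0(\Omega)$; testing with $h=y_n-y$ and applying Proposition \ref{prop3} upgrades this to $y_n\to y$ strongly, so $||y||=1$, hence $y\geq 0$ with $y\not\equiv 0$. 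Along a further subsequence, $f(z,u_n)/||u_n||^{p-1}\stackrel{w}{\rightarrow} m(z)y(z)^{p-1}$ in $L^{p'}(\Omega)$ with $m\in L^{\infty}(\Omega)$ and $m(z)\geq\eta>\hat{\lambda}_1$ a.e.\ on $\{y>0\}$ by $H(f)(ii)$. Passing to the weak limit in the equation gives $-\Delta_p y=m(z)y^{p-1}$, i.e., $\tilde{\lambda}_1(m)=1$, contradicting Proposition \ref{prop4}, which yields $\tilde{\lambda}_1(m)<1$. This is the crux of the proof, since it is where the uniform nonresonance hypothesis $H(f)(ii)$ enters decisively.

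Once boundedness is in hand, pass to a subsequence with $u_n\stackrel{w}{\rightarrow}u^*$ in $W^{1,p}_0(\Omega)$ and $u_n\to u^*$ in $L^p(\Omega)$ and pointwise a.e., whence $u^*\geq\tilde{u}_{\tau_0}>0$ a.e. Testing the equation for $u_n$ with $h=u_n-u^*$, the singular integrand $u_n^{-\gamma}(u_n-u^*)$ is dominated by $\tilde{u}_{\tau_0}^{-\gamma}(u_n+u^*)\in L^1(\Omega)$, and the $f$-integrand by an $L^{p'}\times L^p$ estimate; dominated convergence then drives the right-hand side to $0$, so $\left\langle A(u_n),u_n-u^*\right\rangle\to 0$ and Proposition \ref{prop3} upgrades the convergence to strong. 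Passing to the limit in the equation yields $-\Delta_p u^*=\lambda^*(u^*)^{-\gamma}+f(z,u^*)$, and the nonlinear regularity argument used in Proposition \ref{prop5} (Guedda--V\'eron together with Lieberman) places $u^*\in {\rm int}\,C_+$. Hence $u^*\in S_{\lambda^*}$ and $\lambda^*\in\mathcal{L}$.
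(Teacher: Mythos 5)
Your overall strategy is the same as the paper's: take $\lambda_n\uparrow\lambda^*$ with $u_n\in S_{\lambda_n}$, prove $\{u_n\}_{n\ge1}$ is bounded in $W^{1,p}_0(\Omega)$ by normalizing and invoking the weighted eigenvalue comparison $\tilde{\lambda}_1(\eta_0)<\tilde{\lambda}_1(\hat{\lambda}_1)=1$ from Proposition~\ref{prop4}, upgrade weak to strong convergence via the $(S)_+$ property, and pass to the limit in the equation. That matches (\ref{eq29})--(\ref{eq40}). The one place where you genuinely deviate is the uniform positive lower bound: the paper invokes Corollary~\ref{cor9} to arrange $\{u_n\}_{n\ge1}$ nondecreasing and uses $u_n^{-\gamma}\le u_1^{-\gamma}\in L^{p'}(\Omega)$, whereas you try to manufacture a fixed lower anchor $\tilde{u}_{\tau_0}$ by re-running the truncation of Proposition~\ref{prop8}.

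That step has a gap. The truncation (\ref{eq16}) is built on the order interval $[\tilde{u}_\tau,u_\lambda]$; it only makes sense when $\tilde{u}_\tau\le u_\lambda$, and in Proposition~\ref{prop8} the parameter $\tau$ is chosen small \emph{after} the upper anchor $u_\lambda$ is fixed, precisely to guarantee this. You fix $\tau_0$ once and for all and then pick arbitrary $v_n\in S_{\mu_n}$; nothing ensures $\tilde{u}_{\tau_0}\le v_n$, so the truncation with the pair $(\tilde{u}_{\tau_0},v_n)$ need not be well-posed. Fortunately the repair is simpler than your construction and applies to \emph{any} choice of $u_n\in S_{\lambda_n}$: with $\tau_0\le\min\{\lambda_0,\lambda_1\}$, test the equations for $\tilde{u}_{\tau_0}$ and $u_n$ against $h=(\tilde{u}_{\tau_0}-u_n)^+$. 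On $\{\tilde{u}_{\tau_0}>u_n\}$ one has $0<u_n<\tilde{u}_{\tau_0}\le\delta_0$ (see (\ref{eq11})), so $f(z,u_n)\ge0$ by $H(f)(iv)$, and $\tau_0\tilde{u}_{\tau_0}^{-\gamma}\le\lambda_n u_n^{-\gamma}$ since $\tau_0<\lambda_1\le\lambda_n$; hence $\left\langle A(\tilde{u}_{\tau_0})-A(u_n),(\tilde{u}_{\tau_0}-u_n)^+\right\rangle\le0$, and monotonicity of $A$ forces $(\tilde{u}_{\tau_0}-u_n)^+=0$, i.e.\ $\tilde{u}_{\tau_0}\le u_n$. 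Incidentally, you are right to put the lower bound up front: the singular term in (\ref{eq31}) already needs a uniform $L^{p'}(\Omega)$ bound on $u_n^{-\gamma}$ in order to conclude $\left\langle A(y_n),y_n-y\right\rangle\to0$, whereas the paper only invokes Corollary~\ref{cor9} after the boundedness step has been completed.
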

\begin{proof}
	Let $\{\lambda_n\}_{n\geq1}\subseteq(0,\lambda^*)$ and assume that $\lambda_n\rightarrow(\lambda^*)^{-}$ as $n\rightarrow\infty$. We can find $u_n=u_{\lambda_n}\in S_{\lambda_n}\subseteq {\rm int}\,C_+$ for all $n\in\NN$. Then
	\begin{equation}\label{eq29}
		\langle A(u_n),h\rangle = \int_\Omega[\lambda_n u_n^{-\gamma}+f(z,u_n)]hdz\ \mbox{for all}\ h\in W^{1,p}_0(\lambda),\ \mbox{all}\ n\in\NN.
	\end{equation}
	
	Suppose that $||u_n||\rightarrow\infty$. We set $y_n=\frac{u_n}{||u_n||}\ n\in\NN$. Then $||y_n||=1, y_n\geq0$ for all $n\in\NN$. So, we may assume that
	\begin{equation}\label{eq30}
		y_n\xrightarrow{w}y\ \mbox{in}\ W^{1,p}_0(\Omega)\ \mbox{and}\ y_n\rightarrow y\ \mbox{in}\ L^p(\Omega)\ \mbox{as}\ n\rightarrow\infty.
	\end{equation}
	
	From (\ref{eq29}) we have
	\begin{equation}\label{eq31}
		\langle A(y_n),h\rangle = \int_\Omega\left[\frac{\lambda_n}{||u_n||^{p+\gamma-1}}y^{-\gamma}_n + \frac{N_f(u_n)}{||u_n||^{p-1}}\right]hdz\ \mbox{for all}\ h\in W^{1,p}_0(\Omega),\ n\in\NN.
	\end{equation}
	
	Hypotheses $H(f)(i),(ii)$ imply that
	$$
	|f(z,x)|\leq c_7[1+x^{p-1}]\ \mbox{for almost all}\ z\in\Omega,\ \mbox{all}\ x\geq0,\ \mbox{and some}\ c_7>0.
	$$
	
	This growth condition implies that
	\begin{equation}\label{eq32}
		\left\{\frac{N_f(u_n)}{||u_n||^{p-1}}\right\}_{n\geq1}\subseteq L^{p'}(\Omega)\ \mbox{is bounded}.
	\end{equation}
	
	Then (\ref{eq32}) and hypothesis $H(f)(ii)$ imply that at least for a subsequence, we have
	\begin{eqnarray}\label{eq33}
		\frac{N_f(u_n)}{||u_n||^{p-1}}\xrightarrow{w}\eta_0(z)y^{p-1}\ \mbox{in}\ L^{p'}(\Omega)\ \mbox{as}\ n\rightarrow\infty,\\
		\mbox{with}\ \eta\leq\eta_0(z)\leq\hat{\eta}\ \mbox{for almost all}\ z\in\Omega \nonumber \\
		\mbox{(see Aizicovici, Papageorgiou \& Staicu \cite{1}, proof of Proposition 16)}. \nonumber
	\end{eqnarray}
	
	In (\ref{eq31}) we choose $h=y_n-y\in W^{1,p}_0(\Omega)$, pass to the limit as $n\rightarrow\infty$, and use (\ref{eq30}) and (\ref{eq32}). Then
	\begin{eqnarray}
		& \lim_{n\rightarrow\infty}\langle A(y_n),y_n-y\rangle=0, \nonumber \\
		\Rightarrow & y_n\rightarrow y\ \mbox{in}\ W^{1,p}_0(\Omega)\ \mbox{(see Proposition \ref{prop3}), hence}\ ||y||=1,\ y\geq0. \label{eq34}
	\end{eqnarray}
	
	Therefore, if in (\ref{eq31}) we pass to the limit as $n\rightarrow\infty$ and use (\ref{eq34}) and (\ref{eq33}), then
	\begin{eqnarray}
		& \langle A(y),h\rangle = \int_\Omega\eta_0(z)y^{p-1}hdz\ \mbox{for all}\ h\in W^{1,p}_0(\Omega), \nonumber \\
		\Rightarrow & -\Delta_py(z)=\eta_0(z)y(z)^{p-1}\ \mbox{for almost all}\ z\in\Omega, \ y|_{\partial\Omega}=0. \label{eq35}
	\end{eqnarray}
	
	Since $\eta\leq\eta_0(z)\leq\hat{\eta}$ for almost all $z\in\Omega$ (see (\ref{eq33})), using Proposition \ref{prop4}, we have
	$$
	\tilde{\lambda}_1(\eta_0)\leq\tilde{\lambda}_1(\eta)<\tilde{\lambda}_1(\hat{\lambda_1})=1.
	$$
	
	So, from (\ref{eq35}) and since $||y||=1$ (see (\ref{eq34})), it follows that $y$ must be nodal, a contradiction (see (\ref{eq34})). Therefore
	$$
	\{u_n\}_{n\geq1}\subseteq W^{1,p}_0(\Omega)\ \mbox{is bounded}.
	$$
	
	Hence, we may assume that
	\begin{equation}\label{eq36}
		u_n\xrightarrow{w}u^*\ \mbox{in}\ W^{1,p}_0(\Omega)\ \mbox{and}\ u_n\rightarrow u^*\ \mbox{in}\ L^p(\Omega)\ \mbox{as}\ n\rightarrow\infty.
	\end{equation}
	
	On account of Corollary \ref{cor9}, we may assume that $\{u_n\}_{n\geq1}$ is nondecreasing. Therefore $u^*\neq0$. Also, we have
	\begin{equation}\label{eq37}
		0\leq(u^*)^{-\gamma}\leq u_n^{-\gamma}\leq u_1^{-\gamma}\in L^{p'}(\Omega)\ \mbox{for all}\ n\in\NN.
	\end{equation}
	
	From (\ref{eq36}) and by passing to a subsequence if necessary, we can say that
	\begin{equation}\label{eq38}
		u_n(z)^{-\gamma}\rightarrow u^*(z)^{-\gamma}\ \mbox{for almost all}\ z\in\Omega.
	\end{equation}
	
	From (\ref{eq37}), (\ref{eq38}) and Problem 1.19 of Gasinski \& Papageorgiou \cite{4}, we have that
	\begin{equation}\label{eq39}
		u_n^{-\gamma}\xrightarrow{w}(u^*)^{-\gamma}\ \mbox{in}\ L^{p'}(\Omega)\ \mbox{as}\ n\rightarrow\infty.
	\end{equation}
	
	If in (\ref{eq29}) we choose $h=u_n-u^*\in W^{1,p}_0(\Omega)$, pass to the limit as $n\rightarrow\infty$ and use (\ref{eq39}) and the fact that $\{N_f(u_n)\}_{n\geq1}\subseteq L^{p'}(\Omega)$ is bounded, then
	\begin{eqnarray}
		& & \lim_{n\rightarrow\infty}\langle A(u_n),u_n-u^*\rangle=0, \nonumber \\
		& \Rightarrow & u_n\rightarrow u^*\ \mbox{in}\ W^{1,p}_0(\Omega)\ \mbox{(see Proposition \ref{prop3})}. \label{eq40}
	\end{eqnarray}
	
	Finally, in (\ref{eq29}) we pass to the limit as $n\rightarrow\infty$ and use (\ref{eq39}) and (\ref{eq40}). We obtain
	\begin{eqnarray*}
		& & \langle A(u^*),h\rangle = \int_\Omega[\lambda^*(u^*)^{-\gamma} + f(z,u^*)]hdz\ \mbox{for all}\ h\in W^{1,p}_0(\Omega), \\
		& \Rightarrow & u^*\in S_{\lambda^*}\subseteq {\rm int}\,C_+\ \mbox{and}\ \lambda^*\in\mathcal{L}.
	\end{eqnarray*}
This completes the proof.
\end{proof}
We have proved that
$$
\mathcal{L}=(0,\lambda^*].
$$	
\begin{prop}\label{prop13}
	If hypotheses $H(f)$ hold and $\lambda\in(0,\lambda^*)$, then problem  $(P_\lambda)$ admits at least two positive solutions
	$$
	u_\lambda,\hat{u}_\lambda\in {\rm int}\,C_+, \hat{u}_{\lambda}-u_\lambda\in C_+\backslash\{0\}.
	$$
\end{prop}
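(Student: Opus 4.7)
The strategy is to first produce a positive solution $u_\lambda$ strictly dominated (in the ${\rm int}\,C_+$ sense) by some larger solution $u_\eta$, then to apply the mountain pass theorem to a truncated energy whose singular term has been frozen below $u_\lambda$, so as to obtain a second positive solution above $u_\lambda$. Fix $\lambda\in(0,\lambda^*)$ and pick any $\eta\in(\lambda,\lambda^*)$. Since we have already established $\mathcal{L}=(0,\lambda^*]$, we have $u_\eta\in S_\eta\subseteq{\rm int}\,C_+$, and Proposition~\ref{prop10} supplies $u_\lambda\in S_\lambda\subseteq{\rm int}\,C_+$ with $u_\eta-u_\lambda\in{\rm int}\,C_+$. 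This $u_\lambda$ is the first required solution.

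Next I introduce the Carath\'eodory truncation
\[
\hat{k}_\lambda(z,x)=\begin{cases} \lambda u_\lambda(z)^{-\gamma}+f(z,u_\lambda(z)) & \mbox{if }\ x\leq u_\lambda(z),\\ \lambda x^{-\gamma}+f(z,x) & \mbox{if }\ x>u_\lambda(z),\end{cases}
\]
and the energy $\hat\varphi_\lambda(u)=\frac{1}{p}\|Du\|_p^p-\int_\Omega\hat K_\lambda(z,u)\,dz$, which is of class $C^1$ on $W^{1,p}_0(\Omega)$ because $u_\lambda\in{\rm int}\,C_+$ forces $\lambda u_\lambda^{-\gamma}\in L^{p'}(\Omega)$ (by the Marano--Papageorgiou/Lazer--McKenna estimate already used in the proof of Proposition~\ref{prop5}). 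I also cap the nonlinearity above at $u_\eta(z)$ to obtain a doubly-truncated, coercive, weakly lower semicontinuous energy $\bar\varphi_\lambda$. A direct minimization produces $\tilde u\in W^{1,p}_0(\Omega)$, and testing with $(u_\lambda-\tilde u)^+$ and $(\tilde u-u_\eta)^+$ exactly as in Propositions~\ref{prop6}--\ref{prop8} (using $u_\lambda\in S_\lambda$, $u_\eta\in S_\eta$ and $\eta>\lambda$) forces $\tilde u\in[u_\lambda,u_\eta]\cap S_\lambda$. If $\tilde u\neq u_\lambda$, I set $\hat u_\lambda:=\tilde u$ and am done. Otherwise $u_\lambda$ is a global minimizer of $\bar\varphi_\lambda$, and since $u_\eta-u_\lambda\in{\rm int}\,C_+$ determines a $C^1_0(\overline\Omega)$-ball around $u_\lambda$ on which $\bar\varphi_\lambda$ and $\hat\varphi_\lambda$ coincide, $u_\lambda$ is a $C^1_0(\overline\Omega)$-local minimizer of $\hat\varphi_\lambda$. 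The Brezis--Nirenberg-type upgrade for $p$-Laplacian functionals (see, e.g., \cite{15}) then makes $u_\lambda$ a $W^{1,p}_0(\Omega)$-local minimizer; I may further assume this minimizer is isolated, for otherwise a sequence of nearby critical points already yields the desired $\hat u_\lambda$.

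It remains to verify mountain pass geometry for $\hat\varphi_\lambda$. Choosing $\eta_0\in(\hat\lambda_1,\eta)$, hypothesis $H(f)(ii)$ gives $\hat\varphi_\lambda(t\hat u_1)\leq\frac{t^p}{p}(\hat\lambda_1-\eta_0)+O(t^{p-1})\to-\infty$ as $t\to+\infty$, so some $u_1=t_0\hat u_1$ sits beyond the minimizer ball and satisfies $\hat\varphi_\lambda(u_1)<\hat\varphi_\lambda(u_\lambda)$. The C-condition is verified by the same blow-up argument as in the proof of Proposition~\ref{prop12}: any unbounded Cerami sequence, after normalization, yields a nonnegative $y$ of unit norm satisfying $-\Delta_p y=\eta_0(z)y^{p-1}$ with $\eta\leq\eta_0(z)\leq\hat\eta$ a.e., contradicting $\tilde\lambda_1(\eta_0)\leq\tilde\lambda_1(\eta)<\tilde\lambda_1(\hat\lambda_1)=1$ via Proposition~\ref{prop4}. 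Theorem~\ref{th1} then produces $\hat u_\lambda$ with $\hat\varphi_\lambda'(\hat u_\lambda)=0$ and $\hat\varphi_\lambda(\hat u_\lambda)>\hat\varphi_\lambda(u_\lambda)$, so $\hat u_\lambda\neq u_\lambda$; a final test against $(u_\lambda-\hat u_\lambda)^+$ in $\hat\varphi_\lambda'(\hat u_\lambda)=0$ yields $u_\lambda\leq\hat u_\lambda$, hence $\hat u_\lambda$ solves $(P_\lambda)$ and lies in ${\rm int}\,C_+$ by Corollary~\ref{cor7}, with $\hat u_\lambda-u_\lambda\in C_+\setminus\{0\}$. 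The main obstacle throughout is the $C^1_0$-to-$W^{1,p}_0$ local-minimizer promotion, which relies crucially on the fact that the lower truncation at $u_\lambda\in{\rm int}\,C_+$ has eliminated the singularity and made $\hat\varphi_\lambda$ a genuine $C^1$-functional on the Sobolev space.
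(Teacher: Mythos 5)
Your proof is correct and follows essentially the same route as the paper: Proposition~\ref{prop10} supplies $u_\lambda$ strictly dominated (in $\mathrm{int}\,C_+$) by a solution at a larger parameter, a lower truncation at $u_\lambda$ removes the singularity and makes the energy $C^1$, a double truncation yields the local $C^1_0$- and hence $W^{1,p}_0$-minimizer property of $u_\lambda$, and the mountain pass theorem together with a $(p-1)$-linear blow-up argument for the Cerami condition produces the second solution. The only divergences are cosmetic — you use $u_\eta$ for some $\eta\in(\lambda,\lambda^*)$ where the paper uses $u^*=u_{\lambda^*}$, and your symbol $\eta$ collides with the constant of the same name in $H(f)(ii)$ — but the mathematical content is the paper's.
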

\begin{proof}
	Let $u^*\in S_{\lambda^*}\subseteq {\rm int}\,C_+$ (see Proposition 12). Invoking Proposition \ref{prop10}, we can find $u_\lambda\in S_\lambda\subseteq {\rm int}\,C_+$ such that
	\begin{equation}\label{eq41}
		u^*-u_\lambda\in {\rm int}\,C_+.
	\end{equation}
	
	We consider the Carath\'eodory function $\tau_\lambda(z,x)$ defined by
	\begin{equation}\label{eq42}
		\tau_\lambda(z,x)=\left\{
			\begin{array}{ll}
				\lambda u_\lambda(z)^{-\gamma} + f(z,u_\lambda(z)) & \mbox{if}\ x\leq u_\lambda(z) \\
				\lambda x^{-\gamma} + f(z,x) & \mbox{if}\ u_\lambda(z)<x.
			\end{array}
		\right.
	\end{equation}
	
	Recall that $u_\lambda^{-\gamma}\in L^{p'}(\Omega)$ (see the proof of Proposition \ref{prop5}). We set $T_\lambda(z,x)=\int^x_0\tau_\lambda(z,s)ds$ and consider the functional $\tilde\varphi_\lambda:W^{1,p}_0(\Omega)\rightarrow\RR$ defined by
 	$$
	\tilde{\varphi}_\lambda(u)=\frac{1}{p}||Du||^p_p - \int_\Omega T_\lambda(z,u)dz\ \mbox{for all}\ u\in W^{1,p}_0(\Omega).
	$$
	
	We know that $\tilde{\varphi}_\lambda\in C^1(W^{1,p}_0(\Omega))$. Let $K_{\tilde{\varphi}_\lambda}=\{u\in W^{1,p}_0(\Omega):\tilde{\varphi}_\lambda'(u)=0\}$ (the critical set of $\tilde{\varphi}_\lambda$). Also, for $u\in W^{1,p}_0(\Omega)$, we set $$[u)=\{v\in W^{1,p}(\Omega):u(z)\leq v(z)\ \mbox{for almost all}\ z\in\Omega\}.$$
	\begin{claim}\label{claim1}
		$K_{\tilde{\varphi_\lambda}}\subseteq[u_\lambda)\cap {\rm int}\,C_+$.
	\end{claim}

	Let $u\in K_{\tilde{\varphi}_\lambda}$. We have
	\begin{equation}\label{eq43}
		\langle A(u),h\rangle = \int_\Omega \tau_\lambda(z,u)hdz\ \mbox{for all}\ h\in W^{1,p}_0(\Omega).
	\end{equation}
	
	We choose $h=(u_\lambda-u)^+\in W^{1,p}_0(\Omega)$. Then
	\begin{eqnarray}
		\langle A(u),(u_\lambda-u)^+\rangle & = & \int_\lambda [\lambda u_\lambda^{-\gamma} + f(z,u_\lambda)](u_\lambda-u)^+dz\ \mbox{(see (\ref{eq42}))} \nonumber \\
		& = & \langle A(u_\lambda), (u_\lambda-u)^+\rangle\ \mbox{(since $u_\lambda\in S_\lambda$)}, \nonumber \\
		& \Rightarrow & u_\lambda\leq u. \label{eq44}
	\end{eqnarray}
	
	From (\ref{eq42}), (\ref{eq43}) and (\ref{eq44}), we obtain
	\begin{eqnarray*}
		& & \langle A(u),h\rangle = \int_\Omega [\lambda u^{-\gamma} + f(z,u)]hdz\ \mbox{for all}\ h\in W^{1,p}_0(\Omega), \\
		& \Rightarrow & u\in S_\lambda\subseteq {\rm int}\,C_+\ \mbox{and}\ u_\lambda\leq u, \\
		& \Rightarrow & u\in[u_\lambda)\cap {\rm int}\,C_+.
	\end{eqnarray*}
	This proves Claim 1.
	
	Note that $u_\lambda\in K_{\tilde{\varphi}_\lambda}$. We may assume that
	\begin{equation}\label{eq45}
		K_{\tilde{\varphi}_\lambda}\cap[u_\lambda,u^*]=\{u_\lambda\},
	\end{equation}
	or otherwise we already have a second positive smooth solution for problem \eqref{eqp} (see (\ref{eq42})) and so we are done.
	
	We introduce the following Carath\'eodory function
	\begin{equation}\label{eq46}
		\hat{\tau}_\lambda(z,x)=\left\{
		\begin{array}{ll}
			\tau_\lambda(z,x) & \mbox{if}\ x\leq u^*(z) \\
			\tau_\lambda(z,u^*(z)) & \mbox{if}\ u^*(z)<x.
		\end{array}
		\right.
	\end{equation}
	
	We set $\hat{T_\lambda}(z,x)=\int^x_0\hat{\tau}_\lambda(z,s)ds$ and consider the $C^1$-functional $\hat{\varphi}_\lambda:W^{1,p}(\Omega)\rightarrow\RR$ defined by
	$$
	\hat{\varphi}_\lambda(u)=\frac{1}{p}||Du||^p_p - \int_\Omega\hat{T_\lambda}(z,u)dz\ \mbox{for all}\ u\in W^{1,p}_0(\Omega).
	$$
	
	This functional is coercive (see (\ref{eq46})) and sequentially weakly lower semicontinuous. Hence we can find $\tilde{u}_\lambda\in W^{1,p}_0(\Omega)$ such that
	\begin{eqnarray}
		& & \hat{\varphi}_\lambda(\tilde{u}_\lambda) = \inf\{\hat{\varphi}_\lambda(u):u\in W^{1,p}_0(\Omega)\}, \nonumber \\
		& \Rightarrow & \hat{\varphi}'_{\lambda}(\tilde{u}_\lambda)=0, \nonumber \\
		& \Rightarrow & \langle A(\tilde{u}_\lambda),h\rangle = \int_\Omega\hat{\tau}_\lambda(z,\tilde{u}_\lambda)hdz\ \mbox{for all}\ h\in W^{1,p}_0(\Omega). \label{eq47}
	\end{eqnarray}
	
	In (\ref{eq47}) we choose $h=(u_\lambda-\tilde{u}_\lambda)^+\in W^{1,p}_0(\Omega)$ and $h=(\tilde{u}_\lambda-u^*)^+\in W^{1,p}_0(\Omega)$ and obtain that
	\begin{equation}\label{eq48}
		\tilde{u}_\lambda\in[u_\lambda,u^*].
	\end{equation}
	
	From (\ref{eq46}), (\ref{eq47}), (\ref{eq48}) we infer that
	\begin{eqnarray*}
		& & \tilde{u}_\lambda\in K_{\tilde{\varphi}_\lambda}\cap[u_\lambda,u^*], \\
		& \Rightarrow & \tilde{u}_\lambda = u_\lambda\ \mbox{(see (\ref{eq45}))}.
	\end{eqnarray*}
	
	From (\ref{eq42}) and (\ref{eq46}) it is clear that
	$$
	\tilde{\varphi}_\lambda|_{[0,u^*]}=\hat{\varphi}_\lambda|_{[0,u^*]}.
	$$
	
	Also, $u_\lambda$ is a minimizer of $\hat{\varphi}_\lambda$. Since $u^*-u_\lambda\in {\rm int}\,C_+$ (see (\ref{eq41})), it follows that
	\begin{eqnarray}
		& & u_\lambda\ \mbox{is a local}\ C^1_0(\overline{\Omega})-\mbox{minimizer of}\ \tilde{\varphi}_\lambda, \nonumber \\
		& \Rightarrow & u_\lambda\ \mbox{is a local}\ W^{1,p}_0(\Omega)-\mbox{minimizer of}\ \tilde{\varphi}_\lambda. \label{eq49} \\
		&& \mbox{(see Motreanu, Motreanu \& Papageorgiou \cite[Theorem 12.18, p. 409]{15})}. \nonumber
	\end{eqnarray}
	
	We assume that $K_{\tilde{\varphi}_\lambda}$ is finite or otherwise on account of Claim \ref{claim1}, we already have an infinity of positive smooth solutions for problem \eqref{eqp} bigger than $u_\lambda$ and so we are done. Because of (\ref{eq49}), we can find $\rho\in(0,1)$ small such that
	\begin{eqnarray}
		\tilde{\varphi}_\lambda(u_\lambda)<\inf\{\tilde{\varphi}_\lambda(u):||u-u_\lambda||=\rho\}=\tilde{m}_\lambda \label{eq50} \\
		\mbox{(see Aizicovici, Papageorgiou \& Staicu \cite{1}, proof of Proposition 29)}. \nonumber
	\end{eqnarray}
	Hypothesis $H(f)(ii)$ implies that
	\begin{equation}\label{eq51}
		\tilde{\varphi}_\lambda(t\hat{u}_1)\rightarrow-\infty\ \mbox{as}\ t\rightarrow+\infty.
	\end{equation}
	
	\begin{claim}\label{claim2}
		$\tilde{\varphi}_\lambda$ satisfies the $C$-condition.
	\end{claim}
	
	Let $\{u_n\}_{n\geq1}\subseteq W^{1,p}_0(\Omega)$ such that $\{\tilde{\varphi}_\lambda(u_n)\}_{n\geq1}\subseteq\RR$ is bounded and
	$$
	(1+||u_n||)\tilde{\varphi}_\lambda'(u_n)\rightarrow0\ \mbox{in}\ W^{-1,p'}(\Omega)=W^{1,p}_0(\Omega)^*\ \mbox{as}\ n\rightarrow\infty.
	$$
	
	We have
	\begin{equation}\label{eq52}
		|\langle A(u_n),h\rangle - \int_\Omega\tau_\lambda(z,u_n)hdz| \leq \frac{\varepsilon_n||h||}{1+||u_n||}\ \mbox{for all}\ h\in W^{1,p}_0(\Omega),\ \mbox{with}\ \varepsilon_n\rightarrow0^+.
	\end{equation}
	
	We choose $h=-u^-_n\in W^{1,p}_0(\Omega)$ in (\ref{eq52}) and also use (\ref{eq42}). Then
	\begin{eqnarray}
		& & ||Du^-_n||^p_p \leq c_8||u^-_n||\ \mbox{for some}\ c_8>0, \mbox{and all}\ n\in\NN, \nonumber \\
		& \Rightarrow & \{u^-_n\}_{n\geq1}\subseteq W^{1,p}_0(\Omega)\ \mbox{is bounded}. \label{eq53}
	\end{eqnarray}
	
	Suppose that $||u^+_n||\rightarrow\infty$ and let $y_n=\frac{u^+_n}{||u^+_n||}\ n\in\NN$. Then $||y_n||=1,y_n\geq0$ for all $n\in\NN$. So, we may assume that
	\begin{equation}\label{eq54}
		y_n\xrightarrow{w}y\ \mbox{in}\ W^{1,p}_0(\Omega)\ \mbox{and}\ y_n\rightarrow y\ \mbox{in}\ L^p(\Omega),\ y\geq0.
	\end{equation}
	
	From (\ref{eq52}) and (\ref{eq53}), we have
	\begin{equation}\label{eq55}
		|\langle A(y_n),h\rangle - \int_\Omega\frac{N_{\tau_\lambda}(u_n^+)}{||u_n^+||^{p-1}}hdz| \leq \varepsilon_n'||h||\ \mbox{for all}\ h\in W^{1,p}_0(\Omega),\ \mbox{with}\ \varepsilon_n'\rightarrow0.
	\end{equation}
	From (\ref{eq42}) and hypothesis $H(f)(ii)$, we have
	\begin{eqnarray}\label{eq56}
		\frac{N_{\tau_\lambda}(u_n^+)}{||u_n^+||^{p-1}}\xrightarrow{w} \eta_0(z)y^{p-1}\ \mbox{in}\ L^{p'}(\Omega)\ \mbox{as}\ n\rightarrow\infty\ \\
		\mbox{with}\ \eta\leq\eta_0(z)\leq\hat{\eta}\ \mbox{for almost all}\ z\in\Omega.\ \mbox{(see (\ref{eq33}))}.\nonumber
	\end{eqnarray}
	
	In (\ref{eq55}) we choose $h=y_n-y\in W^{1,p}_0(\Omega)$ and pass to the limit as $n\rightarrow\infty$. Then
	\begin{eqnarray}
		& & \lim_{n\rightarrow\infty}\langle A(y_n),y_n-y\rangle=0, \nonumber \\
		& \Rightarrow & y_n\rightarrow y\ \mbox{in}\ W^{1,p}_0(\Omega)\ \mbox{(see Proposition \ref{prop3}), hence}\ ||y||=1, y\geq0. \label{eq57}
	\end{eqnarray}
	
	Then passing to the limit as $n\rightarrow\infty$ in (\ref{eq55}) and using (\ref{eq56}) and (\ref{eq57}), we obtain
	\begin{eqnarray}
		& & \langle A(y),h\rangle = \int_\Omega\eta_0(z)y^{p-1}hdz\ \mbox{for all}\ h\in W^{1,p}_0(\Omega), \nonumber \\
		& \Rightarrow & -\Delta_py(z)=\eta_0(z)y(z)^{p-1}\ \mbox{for almost all}\ z\in\Omega, y|_{\partial\Omega}=0. \label{eq58}
	\end{eqnarray}
	
	As before, using Proposition 4, we have
	\begin{eqnarray*}
		& & \tilde{\lambda}_1(\eta_0)\leq\tilde{\lambda}_1(\eta)<\tilde{\lambda}_1(\hat{\lambda}_1)=1, \\
		& \Rightarrow & y\ \mbox{must be nodal (see (\ref{eq58}), (\ref{eq57})), a contradiction (see (\ref{eq57}))}.
	\end{eqnarray*}
	
	This proves that $\{u^+_n\}_{n\geq1}\subseteq W^{1,p}_0(\Omega)$ is bounded. Hence
	$$
	\{u_n\}_{n\geq1}\subseteq W^{1,p}_0(\Omega)\ \mbox{is bounded (see (\ref{eq53}))}.
	$$
	
	So, we may assume that
	\begin{equation}\label{eq59}
		u_n\xrightarrow{w}u\ \mbox{in}\ W^{1,p}_0(\Omega)\ \mbox{and}\ u_n\rightarrow u\ \mbox{in}\ L^p(\Omega)\ \mbox{as}\ n\rightarrow\infty.
	\end{equation}
	
	In (\ref{eq52}) we choose $h=u_n-u\in W^{1,p}_0(\Omega)$, pass to the limit as $n\rightarrow\infty$ and use (\ref{eq59}). Then
	\begin{eqnarray*}
		& & \lim_{n\rightarrow\infty}\langle A(u_n),u_n-u\rangle = 0, \\
		& \Rightarrow & u_n\rightarrow u\ \mbox{in}\ W^{1,p}_0(\Omega)\ \mbox{(see Proposition 3)}.
	\end{eqnarray*}
	
	This proves Claim \ref{claim2}.
	
	On account of (\ref{eq50}), (\ref{eq51}) and Claim \ref{claim2} we can apply Theorem \ref{th1} (the mountain pass theorem) and find $\hat{u}_\lambda\in W^{1,p}_0(\Omega)$ such that
	\begin{eqnarray*}
		\hat{u}_\lambda\in K_{\tilde{\varphi}_\lambda}\subseteq[u_\lambda)\cap {\rm int}\,C_+\ \mbox{(see Claim \ref{claim1})}, \\
		\tilde{m_\lambda}\leq\tilde{\varphi}_\lambda(\hat{u}_\lambda)\ \mbox{(see (\ref{eq50})), hence}\ \hat{u}_\lambda\neq u_\lambda.
	\end{eqnarray*}
	
	Therefore $\hat{u}_\lambda\in {\rm int}\,C_+$ is the second positive solution of \eqref{eqp} and
	$$
	\hat{u}_\lambda - u_\lambda\in C_+\backslash\{0\}.
	$$
The proof is now complete.
\end{proof}	

\textcolor{black}{Therefore we have also proved Theorem A, which is the main result of this paper.}

\begin{remark}
	An interesting open problem is whether there is such a bifurcation-type theorem for resonant problems, that is,
	$$
	\hat{\lambda}_1\leq\liminf_{x\rightarrow +\infty}\frac{f(z,x)}{x^{p-1}} \leq\limsup_{x\rightarrow +\infty}\frac{f(z,x)}{x^{p-1}}\leq\hat{\eta}\ \mbox{uniformly for almost all}\ z\in\Omega
	$$
	or even for the nonuniformly nonresonant problems, that is,
	$$\eta(z)\leq\liminf\limits_{x\rightarrow+\infty}\frac{f(z,x)}{x^{p-1}}\leq\limsup\limits_{x\rightarrow +\infty}\frac{f(z,x)}{x^{p-1}}\leq\hat{\eta}\ \mbox{uniformly for almost all}\ z\in\Omega$$
	with $\eta\in L^\infty(\Omega)$ such that
	$$
	\hat{\lambda}_1\leq\eta(z)\ \mbox{for almost all}\ z\in\Omega,\ \eta\not\equiv\hat{\lambda}_1.
	$$
\end{remark}

In both cases it seems to be difficult to show that $\lambda^*<\infty$. Additional conditions on $f(z,\cdot)$ might be needed.

\medskip
{\bf Acknowledgements.} \textcolor{black}{The authors wish to thank the referee for his/her remarks and suggestions.}
This research was supported by the Slovenian Research Agency grants
P1-0292, J1-8131, J1-7025, N1-0064, and N1-0083. V.D.~R\u adulescu acknowledges the support through a grant of the Romanian Ministry of Research and Innovation, CNCS-UEFISCDI, project number PN-III-P4-ID-PCE-2016-0130,
within PNCDI III.

\end{document}